\documentclass[12pt,a4paper]{article}
\usepackage{amsfonts}
\usepackage{subfigure}
\usepackage{graphicx}
\usepackage{comment}
\usepackage{color}
\usepackage{mathrsfs}
\usepackage{multirow}
\usepackage{mathtools}
\usepackage{stmaryrd}
\usepackage{amsmath}
\usepackage{amssymb}
\usepackage{bbm}      
\usepackage{mathrsfs}
\usepackage{graphicx}
\usepackage{algorithm}
\usepackage{algorithmic}
\usepackage{subfigure}
\usepackage{pgfplots}
\usepackage{mathrsfs}
\usetikzlibrary{arrows}
\usepackage{enumerate}
\usepackage{theorem}
\usepackage{ulem}
\usepackage{quiver}
\makeatletter
\def\tank#1{\protected@xdef\@thanks{\@thanks
		\protect\footnotetext[0]{#1}}}
\def\bigfoot{
	
	\@footnotetext}
\makeatother

\newcommand{\ea}{\end{array}}
\newtheorem{theorem}{Theorem}[section]

\newtheorem{corollary}{Corollary}[section]
\newtheorem{lemma}{Lemma}[section]
\newtheorem{definition}{Definition}[section]
\newtheorem{remark}{Remark}[section]
\newtheorem{example}{Example}[section]

{\theorembodyfont{\rmfamily}
}

\newenvironment{proof}{Proof.}

\pgfplotsset{compat=1.18}
\begin{document}  

\title{\Large\bf Dimension Calculation for Spline Spaces over Rectilinear Partitions via Smoothing Cofactor Method} 

\author{Bingru~Huang\thanks{Corresponding author Email: hbr999@ustc.edu.cn} \quad Falai~Chen \\
        School of Mathematical Sciences \\
        University of Science and Technology of China \\
        Hefei, 230026, People's Republic of China}
\date{}
\maketitle
\begin{center}
	\begin{minipage}{160mm}
{\bf Abstract.}  This paper presents a general framework for calculating the dimension of spline spaces over arbitrary rectilinear partitions using the smoothing cofactor method. The approach extends existing dimension theory for polynomial splines over T-meshes, reducing the problem to the rank computation of conformality matrices associated with $TE$-connected components. Furthermore, a new class of rectilinear partitions, termed partitions with disjoint truncated $l$-edges, is introduced. It is proven that under specific conditions, the dimension of the corresponding spline space attains Schumaker’s lower bound, demonstrating that this lower bound is attainable for arbitrary degree $d$ and smoothness order $\mu$ in certain partition configurations. Numerical examples, including the Morgan--Scott and Yuan--Stillman partitions, validate the effectiveness and generality of the framework for both triangular and non-triangular rectilinear partitions.
	\end{minipage}
\end{center}
\section{Introduction}
Splines, as piecewise smooth polynomials, are fundamental in approximation theory~\cite{deBoor2001,schumaker2007spline,trefethen2013}, numerical analysis~\cite{cheney2009,rivlin1981introduction}, geometric modeling~\cite{piegl1997,hoschek1993,farin2002}, computer graphics~\cite{foley1996}, and isogeometric analysis~\cite{hughes2005,cottrell2009,scott2011}. Important research problems for a space spline include determining the dimension of the spline space and constructing its basis functions. In most practical settings, these spline spaces are defined over rectilinear partitions, most commonly triangular or rectangular partitions.
A particularly fundamental and long-standing challenge in this field is the accurate computation of the dimension of spline spaces over such partitions. Three main approaches have been developed to address this problem:
\begin{itemize}
\item The first method constructs linear functionals $\lambda_i$, $i\in\mathcal{I}$ for index set $\mathcal{I}$, such that $\lambda_i(s)=0$ for all $i\in\mathcal{I}$ and $s \in S_d^\mu(\Delta)$ implies $s \equiv 0$. Schumaker~\cite{schumaker1979dimension} first provided a lower bound for spline spaces over arbitrary triangulations using this approach. For rectilinear partitions, lower bounds were established in~\cite{schumaker1984bounds,manni1991dimension}. This method is widely applied; a prominent example is the Bézier--Bernstein technique (or B-net method), which uses the Bernstein basis for spline patches to recast smoothness conditions across adjacent cells as a linear system on Bézier ordinates. Related works include~\cite{deng2000note,manni2018dimension,shi1991singularity,alfeld1987dimension,alfeld1987minimally,alfeld1992dimension,MDS,dim2006}. See Section 3 of~\cite{nurnberger2000developments} for details.
\item The second method, the smoothing cofactor method, applies Bézout's theorem from algebraic geometry to convert inter-cell continuity into smooth cofactor conformality conditions for dimension analysis. Introduced by Wang~\cite{wang1975structural}, it has been used extensively for spline spaces over triangular meshes, rectangular meshes, and T-meshes~\cite{wang2013multivariate,chui1983multivariate,chui1983smooth,wang1985dimension,chui1984spaces,huang2024stability,dim20061,dim2016,zeng2015,zeng2016,zeng2018,bracco2019tchebycheffian}.
\item The third method, the homological algebraic method, was pioneered by Billera~\cite{billera1988homology}, earning the Fulkerson Prize. Subsequent works~\cite{schenck1997local,schenck1997family,dim2014,bracco2016dimension,bracco2016generalized,yuan2019counter,yuan2019upper,toshniwal2021polynomial,toshniwal2020dimension,toshniwal2021counting,toshniwal2023algebraic,schenck2020new,mourrain2013homological,schenck2016algebraic} employ algebraic geometry tools for spline analysis. This method has advanced conjectures on spline dimensions; for instance, Yuan and Stillman~\cite{yuan2019counter} recently provided a counterexample to the Schenck--Stiller ``$2r+1$'' conjecture, which posits that for degree $\geq 2r+1$ (with smoothness order $r$), the dimension equals Schumaker's lower bound~\cite{schumaker1979dimension}. 
\end{itemize}

Despite significant progress through these methods, a key difficulty persists: in many cases, the dimension of a spline space over a rectilinear partition depends not only on topological (combinatorial) information — such as the number of vertices, edges, and cells — but also on the specific geometric configuration of the partition. This geometric dependence can lead to dimensional instability, where the dimension is not constant for all generic partitions sharing the same topology.

A classical illustration of this instability is the Morgan–Scott triangulation $  \Delta_{\mathrm{MS}}  $ (see Figure~\ref{fig:MS partition}, left). Morgan and Scott~\cite{morgan1977dimension} showed that the dimension of the biquadratic $  C^1  $ spline space $  S_2^1(\Delta_{\mathrm{MS}})  $ depends on the geometric positions of the vertices. Diener~\cite{diener1990instability} extended this result to the spaces $  S_{2r}^r(\Delta_{\mathrm{MS}})  $ for $  r \geq 2  $, confirming similar geometric dependence. Consequently, the dimension cannot in general be expressed solely in terms of topological quantities. Further investigations of this instability and related dimension problems appear in~\cite{li2011dimensions,feng1996dimension,deng2000note,shi1991singularity,alfeld1986dimension,Ins2012,li2019instability,guo2015problem}.

T-meshes, a special subclass of rectilinear partitions that support local refinement~\cite{ts1,ts2}, typically rely on piecewise tensor-product polynomial spaces~\cite{dim2006}. Recent research has clarified the structure of T-meshes~\cite{huang2024stability,huang2025dimension,huang2025preliminarystudydimensionalstability,zhong2025basis}, identified sources of dimensional instability at highest smoothness levels~\cite{huang2024stability}, and constructed bases for arbitrary T-meshes~\cite{zhong2025basis}. A key technique in these works applies the smoothing cofactor method to reduce dimension computation to determining the rank of the conformality matrix associated with each T-connected component~\cite{zeng2015}.

This paper generalizes the dimension computation framework for T-meshes to arbitrary rectilinear partitions. Analogous to the T-connected component, we introduce the \textbf{TE-connected component} to determine the dimension of the corresponding spline space.

The main contributions of this paper are as follows:
\begin{itemize}
\item[(1)] We extend the smoothing cofactor framework from T-meshes to arbitrary rectilinear partitions by introducing the concept of TE-connected components. This allows us to derive an explicit formula for the dimension of the spline space \(S_d^\mu(\Delta)\) in terms of the rank of an explicitly constructible conformality matrix \(M(TE(\Delta))\).

\item[(2)] We propose a complete, step-by-step algorithm for constructing the conformality matrix \(M(TE(\Delta))\) for general smoothness \(\mu\), based on homogeneous polynomial decomposition and decoupled edge cofactors.

\item[(3)] We introduce a new family of rectilinear partitions, called \textbf{partitions with disjoint truncated $l$-edges}. For this family, we prove that under a specific condition, the dimension of the corresponding spline space \(S_d^\mu(\Delta)\) exactly attains the lower bound proposed by Schumaker~\cite{schumaker1979dimension} for arbitrary degree $d$ and smoothness order $\mu$.

\item[(4)] We demonstrate the effectiveness and generality of the proposed framework through several examples. First, we validate our framework on two well-known benchmark examples: the Morgan--Scott partition, where the known dimensional instability is recovered, and the Yuan--Stillman partition, which provides a counterexample to the Schenck--Stiller ``$2r+1$'' conjecture. Second, we show that our method is not limited to triangular partitions, but can compute the dimension of spline spaces over arbitrary rectilinear partitions (including those with polygonal cells). 

\end{itemize}
The paper is organized as follows. Section~2 reviews the smoothing cofactor method for splines over T-meshes and the global conformality conditions for rectilinear partitions. Section~3 presents the general framework for computing the dimension of spline spaces over arbitrary rectilinear partitions. Section~4 introduces and analyzes a new family of rectilinear partitions with disjoint truncated $l$-edges. Section~5 demonstrates the effectiveness of the proposed framework through several representative examples. Finally, Section~6 concludes the paper and discusses directions for future work.

\section{Preliminaries}\label{sec.prelim}
In this section, we introduce the framework for computing the dimension of spline spaces over T-meshes using the smoothing cofactor method, with emphasis on the T-connected component. We also review key notions of rectilinear partitions and results on dimension computation over arbitrary rectilinear partitions via the smoothing cofactor method.

\subsection{Spline spaces over T-meshes}
We first introduce necessary terminology and notation. A T-mesh is a rectangular partition of a rectangular domain that allows T-junctions. Throughout this paper, we consider T-meshes with simply-connected interior faces and no holes. Vertices are the grid points of the mesh; among them, T-junctions are interior vertices of valence 3. An edge is a line segment connecting two adjacent vertices along a horizontal or vertical grid line. Edges are classified as boundary edges or interior edges according to their location.

A large edge (denoted $l$-edge) is a maximal straight segment composed of one or more collinear edges, with endpoints being either T-junctions or boundary vertices. An interior $l$-edge is further categorized as follows:
\begin{itemize}
    \item A cross-cut if both endpoints lie on the domain boundary.
    \item A T $l$-edge if both endpoints are T-junctions.
    \item A ray otherwise (one endpoint is a T-junction and the other lies on the boundary).
\end{itemize}

The set of all T $l$-edges in a T-mesh $\mathscr{T}$ forms the \textbf{T-connected component} denoted as $T(\mathscr{T})$, which is central to the dimension formula derived by the smoothing cofactor method.

We next define the spline space on a T-mesh. Let $\mathscr{F}$ denote the set of all rectangular faces. The spline space is given by
$$S_{d_1,d_2}^{\mu_1,\mu_2}(\mathscr{T}) = \left\{ s(x,y) \in C^{\mu_1,\mu_2}(\Phi) : s|_{\phi} \in \mathbb{P}_{d_1} \otimes \mathbb{P}_{d_2},\ \forall \phi \in \mathscr{F} \right\},$$
where $\mathbb{P}_{d_1} \otimes \mathbb{P}_{d_2}$ is the tensor-product polynomial space of bidegree $(d_1,d_2)$, and $C^{\mu_1,\mu_2}(\Phi)$ consists of bivariate functions that are $\mu_1$-times continuously differentiable in the $x$-direction and $\mu_2$-times in the $y$-direction over the domain $\Phi$ occupied by $\mathscr{F}$. This subsection focuses on the uniform highest-smoothness case: $d_1 = d_2 = d$ and $\mu_1 = \mu_2 = d-1$. The spline space is simply denoted as $S_d(\mathscr T)$ in this case.

The smoothing cofactor method, introduced in~\cite{wang1975structural} and extended to T-meshes in~\cite{huang2024stability,dim20061,dim2016,zeng2015,zeng2016,zeng2018,huang2025dimension,huang2025preliminarystudydimensionalstability,wu2013}, converts $C^{d-1,d-1}$ continuity constraints across interior edges into linear relations among vertex cofactors on each T $l$-edge(\cite{huang2024stability}).
Consider a horizontal T $l$-edge with vertices at $x$-coordinates $x_1 < x_2 < \cdots < x_r$. The associated \textbf{vertex cofactors} $\delta_1,\delta_2,\dots,\delta_r \in \mathbb{R}$ (see Figure~\ref{fig: vertex cofactors}) satisfy the conformality conditions~\cite{huang2024stability,dim20061,dim2016,wu2013}.
\begin{equation}\label{eq4}
\sum\limits_{i=1}^{r}\delta_{i}(x-x_i)^d=0
\end{equation}

\begin{figure}[htbp]
    \centering
\definecolor{black}{rgb}{0.26666666666666666,0.26666666666666666,0.26666666666666666}
\begin{tikzpicture}[line cap=round,line join=round,>=triangle 45,x=1cm,y=1cm,scale=2]
\draw [line width=1pt] (1,0.5)-- (1,2.5);
\draw [line width=1pt] (1,1.5)-- (5,1.5);
\draw [line width=1pt] (5,2.5)-- (5,0.5);
\draw [line width=1pt] (1.5,1.5)-- (1.5,2);
\draw [line width=1pt] (2,1.5)-- (2,1);
\draw [line width=1pt] (2.5,2)-- (2.5,1);
\draw [line width=1pt] (3,1.5)-- (3,1);
\draw [line width=1pt] (4.5,1.5)-- (4.5,1);
\draw [line width=1pt] (4,1.5)-- (4,2);
\draw (0.94,1.5) node[anchor=north west] {$\delta_1$};
\draw (1.35,1.5) node[anchor=north west] {$\delta_2$};
\draw (1.85,1.85) node[anchor=north west] {$\delta_3$};
\draw (2.48,1.5) node[anchor=north west] {$\delta_4$};
\draw (2.94,1.5) node[anchor=north west] {$\delta_5$};
\draw (3.82,1.5) node[anchor=north west] {$\delta_{r-2}$};
\draw (4.37,1.85) node[anchor=north west] {$\delta_{r-1}$};
\draw (4.95,1.5) node[anchor=north west] {$\delta_r$};
\draw (3.32,1.5) node[anchor=north west] {$\ldots$};
\begin{scriptsize}
\draw [fill=black] (1,1.5) circle (1pt);
\draw [fill=black] (1.5,1.5) circle (1pt);
\draw [fill=black] (2,1.5) circle (1pt);
\draw [fill=black] (2.5,1.5) circle (1pt);
\draw [fill=black] (3,1.5) circle (1pt);
\draw [fill=black] (4,1.5) circle (1pt);
\draw [fill=black] (4.5,1.5) circle (1pt);
\draw [fill=black] (5,1.5) circle (1pt);
\end{scriptsize}
\end{tikzpicture}
    \caption{\label{fig: vertex cofactors}Vertex cofactors along a horizontal T $l$-edge}
\end{figure}

This equation is equivalent to a linear system (denoted by $\mathscr{P}=0$):
\begin{equation}
\begin{pmatrix}
	1 & 1 & \cdots & \cdots & 1 \\
	x_1 & x_2 & \cdots & \cdots & x_r\\
	x_1^2 & x_2^2 & \cdots & \cdots & x_r^2\\
	\cdots & \cdots & \cdots & \cdots & \cdots\\
	x_1^{d-1} & x_2^{d-1} & \cdots & \cdots & x_r^{d-1}\\
	x_1^d & x_2^d & \cdots & \cdots & x_r^d
\end{pmatrix}
\begin{pmatrix}
	\delta_1 \\
	\delta_2 \\
	\delta_3 \\
	\vdots\\
	\delta_{r-1} \\
	\delta_{r} 
\end{pmatrix}
=\begin{pmatrix}
	0 \\
	0 \\
	0 \\
	\vdots\\
	0 \\
	0
\end{pmatrix}.
\label{gcc}
\end{equation}

The linear systems arising from all T $l$-edges in $\mathscr{T}$ constitute the \textbf{conformality conditions} for $S_d(\mathscr{T})$. The conformality vector space of the T-connected component is defined as follows~\cite{huang2024stability,zeng2015}.
\begin{definition}[{\cite{huang2024stability}}]\label{def2.4}
Let $T(\mathscr{T})$ be the T-connected component of $\mathscr{T}$, with edges $l_1,\dots,l_t$ and vertex cofactors $\delta_1,\dots,\delta_v$. The conformality vector space (\textsf{CVS}) is
$$\textsf{CVS}[T(\mathscr{T})] := \bigl\{ \boldsymbol{\delta}=(\delta_1,\dots,\delta_v) : \mathscr{P}_{l_i}=0,\ 1\leq i\leq t \bigr\}.$$
where $\mathscr{P}_{l_i}=0$ denotes the conformality condition across the T $l$-edge $l_i$, analogous to equation~\eqref{gcc}.

The coefficient matrix of this homogeneous system is the \textbf{conformality matrix} $M(T(\mathscr{T}))$.
\end{definition}
The dimension formula is then given by the following formula~\cite{zeng2015}.
\begin{theorem}[{\cite{huang2024stability,zeng2015}}]\label{thm: dim formula}
For a T-mesh $\mathscr{T}$,
$$\dim S_d(\mathscr{T}) = (d+1)^2 + c(d+1) + n_v - \mathrm{rank}\bigl(M(T(\mathscr{T}))\bigr),$$
where $c$ is the number of cross-cuts and $n_v$ is the number of interior vertices.
\end{theorem}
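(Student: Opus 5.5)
We prove the formula by the smoothing cofactor method: we build an explicit linear isomorphism between $S_d(\mathscr{T})$ and the direct sum of a space of "free" parameters and $\textsf{CVS}[T(\mathscr{T})]$, then count dimensions. Order the interior vertices and $l$-edges. The key observation, from the smoothing cofactor method, is that for $C^{d-1,d-1}$ splines of bidegree $(d,d)$ the jump across a vertical line $x=x_0$ between two adjacent cells has the form $(x-x_0)^d q(y)$ with $q\in\mathbb{P}_d$. Likewise, the jump across a horizontal line $y=y_0$ has the form $(y-y_0)^d p(x)$.

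\textbf{Step 1 (local structure at a vertex).} At each interior vertex $(x_i,y_j)$, going around it and summing the jumps gives zero. If the vertex is a crossing, the four edge cofactors satisfy one relation. This is easiest to see by first writing the spline in the ``plus-function'' form
$$s = p_0 + \sum_{\text{vertical } l\text{-edges}} (x-x_l)_+^d\, q_l(y)\cdot[\text{support}] + \sum_{\text{horizontal}} (y-y_l)_+^d\, p_l(x)\cdot[\text{support}] .$$
We then show that along an $l$-edge the cofactor is piecewise polynomial in the other variable. Across each interior vertex on the edge, it can change only by a multiple $\delta\,(y-y_j)^d$ (respectively $(x-x_i)^d$); this multiple is the vertex cofactor. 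One checks that each interior vertex contributes exactly one scalar freedom $\delta_v$. This gives the count $n_v$.

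\textbf{Step 2 (global parametrization).} Each spline is determined by three pieces of data:
\begin{itemize}
\item the polynomial on a reference cell, giving $(d+1)^2$ parameters;
\item for each cross-cut, its cofactor polynomial in $\mathbb{P}_d$, giving $c(d+1)$ parameters;
\item the vertex cofactors, giving $n_v$ scalars.
\end{itemize}
For rays, the cofactor starts at the boundary end, so it is determined freely by its vertex cofactors and imposes no constraint. For a T $l$-edge, however, the cofactor must vanish beyond both T-junction endpoints. Starting from zero at the left end and adding $\sum_i \delta_i (x-x_i)^d$, it must return to zero at the right end. This is exactly condition \eqref{eq4}, equivalently \eqref{gcc}.

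\textbf{Step 3 (counting).} The map from parameters to splines is injective, since the representation is unique: jumps determine the cofactors. It is also surjective onto tuples satisfying the T $l$-edge constraints. Cross-cut and ray parameters are unconstrained. Hence
$$\dim S_d(\mathscr{T}) = (d+1)^2 + c(d+1) + \dim\{\boldsymbol\delta\in\mathbb{R}^{n_v}: \mathscr P_{l}=0\ \forall\, l\in T(\mathscr{T})\}.$$
Vertex cofactors not lying on any T $l$-edge are free. Since the constraints involve only the cofactors on $T(\mathscr{T})$, the last dimension equals $n_v - \mathrm{rank}\, M(T(\mathscr{T}))$.

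\textbf{Main obstacle.} The delicate step is Step 1/2: showing that each interior vertex carries exactly one independent cofactor $\delta$, shared consistently between the horizontal and vertical $l$-edges through it. At a T-junction, the cofactor of the terminating edge must match the $\delta$ appearing on the through edge. We also need that no hidden constraints arise beyond those of the T $l$-edges, in particular through interactions at crossing vertices. We would handle this by a local analysis of the four (or three) jumps around the vertex, using that $(x-x_i)^d(y-y_j)^d$ spans the space of bidegree-$(d,d)$ polynomials vanishing to order $d$ on both lines. Surjectivity then follows by explicitly summing the tensor truncated powers $\delta_v (x-x_i)_+^d (y-y_j)_+^d$ restricted to the appropriate supports.
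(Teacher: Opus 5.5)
Your argument is correct and follows the same smoothing-cofactor route the paper relies on. The paper quotes this theorem from \cite{huang2024stability,zeng2015} without proof, but its proof of Theorem~\ref{thm:main} has the same structure as yours: a reference polynomial, $d+1$ free cofactor coefficients per cross-cut, one vertex cofactor per interior vertex (since $(x-x_i)^dA(y)=(y-y_j)^dB(x)$ with $A,B\in\mathbb{P}_d$ forces $A=\delta(y-y_j)^d$ and $B=\delta(x-x_i)^d$), and the system \eqref{gcc} on each T $l$-edge.

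For existence, it is cleaner to invoke the conformality theorem on the simply connected domain than to sum truncated powers ``restricted to supports''. That theorem says that cofactors satisfying every vertex condition determine a unique spline once the polynomial on one cell is fixed. The truncated-power route has two problems as written: restriction destroys smoothness, and unrestricted tensor truncated powers leave a nonzero jump beyond the T-junction end of a ray unless the truncation along that ray points toward its boundary end.
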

Thus, the study of $\dim S_d(\mathscr T)$ comes down to the study of $\mathrm{rank}(M(L(\mathscr{T})))$ (see Figure~\ref{fig:T-connected component}). Theorem~\ref{thm: dim formula} provides a complete computational framework consisting of the following steps:
\begin{itemize}
\item Classify interior $l$-edges into cross-cuts, rays, and T $l$-edges.
\item Convert continuity constraints across each T $l$-edge into linear relations among vertex cofactors.
\item Assemble local conformality conditions into the global system using the T-connected component $T(\mathscr{T})$.
\item Compute the spline space dimension as $\dim S_d(\mathscr T)=(d+1)^2 + c(d+1) + n_v - \mathrm{rank}\bigl(M(T(\mathscr{T}))\bigr)$.
\end{itemize}

\begin{figure}[htbp]
    \centering
\begin{tikzpicture}[line cap=round,line join=round,>=triangle 45,x=1.0cm,y=1.0cm]
\draw [line width=1.pt] (6.,1.)-- (6.,7.);
\draw [line width=1.pt] (6.,1.)-- (1.,1.);
\draw [line width=1.pt] (1.,1.)-- (1.,7.);
\draw [line width=1.pt] (1.,7.)-- (6.,7.);
\draw [line width=1.pt] (1.,6.)-- (6.,6.);
\draw [line width=1.pt] (2.,7.)-- (2.,1.);
\draw [line width=1.pt] (5.,1.)-- (5.,6.);
\draw [line width=1.pt] (2.,4.)-- (5.,4.);
\draw [line width=1.pt] (1.,2.)-- (5.,2.);
\draw [line width=1.pt] (3.,2.)-- (3.,6.);
\draw [line width=1.pt] (4.,4.)-- (4.,2.);
\draw [line width=1.pt] (2.,3.)-- (5.,3.);
\draw [line width=1.pt] (9.,4.)-- (12.,4.);
\draw [line width=1.pt] (9.,3.)-- (12.,3.);
\draw [line width=1.pt] (10.,2.)-- (10.,6.);
\draw [line width=1.pt] (11.,4.)-- (11.,2.);
\draw (3.3,0.68) node[anchor=north west] {$\mathscr T$};
\draw (10.,0.68) node[anchor=north west] {$T(\mathscr T)$};
\begin{scriptsize}
\draw [fill=black] (2.,4.) circle (1.5pt);
\draw[color=black] (2.2,4.2) node {$v_2$};
\draw [fill=black] (5.,4.) circle (1.5pt);
\draw[color=black] (5.2,4.2) node {$v_5$};
\draw [fill=black] (3.,2.) circle (1.5pt);
\draw[color=black] (3.25,2.2) node {$v_{10}$};
\draw [fill=black] (3.,6.) circle (1.5pt);
\draw[color=black] (3.,6.2) node {$v_1$};
\draw [fill=black] (4.,4.) circle (1.5pt);
\draw[color=black] (4.2,4.2) node {$v_4$};
\draw [fill=black] (4.,2.) circle (1.5pt);
\draw[color=black] (4.25,2.2) node {$v_{11}$};
\draw [fill=black] (2.,3.) circle (1.5pt);
\draw[color=black] (2.2,3.2) node {$v_6$};
\draw [fill=black] (5.,3.) circle (1.5pt);
\draw[color=black] (5.2,3.2) node {$v_9$};
\draw [fill=black] (3.,4.) circle (1.5pt);
\draw[color=black] (3.2,4.2) node {$v_3$};
\draw [fill=black] (3.,3.) circle (1.5pt);
\draw[color=black] (3.2,3.2) node {$v_7$};
\draw [fill=black] (4.,3.) circle (1.5pt);
\draw[color=black] (4.2,3.2) node {$v_8$};
\draw [fill=black] (9.,4.) circle (1.5pt);
\draw[color=black] (9.12,4.2) node {$v_2$};
\draw [fill=black] (12.,4.) circle (1.5pt);
\draw[color=black] (12.,4.2) node {$v_5$};
\draw [fill=black] (9.,3.) circle (1.5pt);
\draw[color=black] (9.12,3.2) node {$v_6$};
\draw [fill=black] (12.,3.) circle (1.5pt);
\draw[color=black] (12.,3.2) node {$v_9$};
\draw [fill=black] (10.,2.) circle (1.5pt);
\draw[color=black] (10.25,2.2) node {$v_{10}$};
\draw [fill=black] (10.,6.) circle (1.5pt);
\draw[color=black] (10.,6.2) node {$v_1$};
\draw [fill=black] (11.,4.) circle (1.5pt);
\draw[color=black] (11.15,4.2) node {$v_4$};
\draw [fill=black] (11.,2.) circle (1.5pt);
\draw[color=black] (11.25,2.2) node {$v_{11}$};
\draw [fill=black] (10.,4.) circle (1.5pt);
\draw[color=black] (10.17,4.2) node {$v_3$};
\draw [fill=black] (10.,3.) circle (1.5pt);
\draw[color=black] (10.17,3.2) node {$v_7$};
\draw [fill=black] (11.,3.) circle (1.5pt);
\draw[color=black] (11.15,3.2) node {$v_8$};
\end{scriptsize}
\end{tikzpicture}
    \caption{\label{fig:T-connected component}T-mesh and its T-connected component}
\end{figure}

\subsection{Global conformality conditions for spline spaces over rectilinear partitions}
The smoothing cofactor method was originally developed to compute the dimension of spline spaces over arbitrary partitions, including rectilinear partitions. This subsection reviews basic notions of rectilinear partitions and the global conformality conditions used in the smoothing cofactor method.

Roughly speaking, a rectilinear partition $\Delta$ is a partition of a domain formed by a finite set of straight line segments. Examples include triangulations, quadrangulations, and quasi-cross-cut partitions. Faces, edges, and vertices are defined in the usual way. Each interior line segment that cannot be extended further is called an interior large edge, or \textbf{$l$-edge} for short. Following~\cite{li2011dimensions}, interior $l$-edges fall into three categories:
\begin{definition}[{\cite{li2011dimensions}}]
For a rectilinear partition $\Delta$, an interior $l$-edge $e$ is classified as follows:
\begin{itemize}
\item \emph{Cross-cut}: both endpoints lie on the domain boundary;
\item \emph{Ray}: exactly one endpoint lies on the domain boundary;
\item \emph{Truncated $l$-edge}: neither endpoint lies on the domain boundary.
\end{itemize}
\end{definition}

Analogous to the T-connected component in T-meshes, we introduce the following concept.

\begin{definition}
The \textbf{TE-connected component} of a rectilinear partition $\Delta$, denoted $TE(\Delta)$, is the set of all truncated $l$-edges in $\Delta$.
\end{definition}

For example, consider the Morgan--Scott partition $\Delta_{\mathrm{MS}}$ in Figure~\ref{fig:MS partition} (left). This is a classic rectilinear partition. The segments $CD$, $CE$, $DE$, $DA$, $DF$, $FA$, $FB$, $FE$, and $EB$ are $l$-edges. Among them, $CD$, $CE$, $DA$, $FA$, $EB$, and $FB$ are rays, while $DE$, $DF$, and $EF$ are truncated $l$-edges. The set of these truncated $l$-edges forms the TE-connected component $TE(\Delta_{\mathrm{MS}})$, as shown in Figure~\ref{fig:MS partition} (right).

\begin{figure}[htbp]
    \centering
    \begin{tikzpicture}[line cap=round,line join=round,>=triangle 45,x=1cm,y=1cm,scale=0.8]
\draw [line width=1pt] (1,1)-- (8,2);
\draw [line width=1pt] (8,2)-- (3,7);
\draw [line width=1pt] (3,7)-- (1,1);
\draw [line width=1pt] (3,7)-- (3.1,4.4);
\draw [line width=1pt] (3.1,4.4)-- (4.78,4.06);
\draw [line width=1pt] (4.78,4.06)-- (3.9,2.76);
\draw [line width=1pt] (3.9,2.76)-- (3.1,4.4);
\draw [line width=1pt] (4.78,4.06)-- (3,7);
\draw [line width=1pt] (3.9,2.76)-- (1,1);
\draw [line width=1pt] (1,1)-- (3.1,4.4);
\draw [line width=1pt] (3.9,2.76)-- (8,2);
\draw [line width=1pt] (8,2)-- (4.78,4.06);
\draw [line width=1pt] (12,5)-- (13.68,4.66);
\draw [line width=1pt] (13.68,4.66)-- (12.8,3.36);
\draw [line width=1pt] (12.8,3.36)-- (12,5);
\draw (3.9,0.8) node[anchor=north west] {$\Delta_{\mathrm{MS}}$};
\draw (12,0.8) node[anchor=north west] {$TE(\Delta_{\mathrm{MS}})$};
\begin{scriptsize}
\draw [fill=black] (1,1) circle (2pt);
\draw[color=black] (0.75,1.) node {$A$};
\draw [fill=black] (8,2) circle (2pt);
\draw[color=black] (8.16,2.3) node {$B$};
\draw [fill=black] (3,7) circle (2pt);
\draw[color=black] (3.,7.3) node {$C$};
\draw [fill=black] (3.1,4.4) circle (2pt);
\draw[color=black] (3.3,4.75) node {$D$};
\draw [fill=black] (4.78,4.06) circle (2pt);
\draw[color=black] (4.94,4.25) node {$E$};
\draw [fill=black] (3.9,2.76) circle (2pt);
\draw[color=black] (3.95,3.15) node {$F$};
\draw [fill=black] (12,5) circle (2pt);
\draw[color=black] (12.,5.39) node {$D$};
\draw [fill=black] (13.68,4.66) circle (2pt);
\draw[color=black] (13.92,5.05) node {$E$};
\draw [fill=black] (12.8,3.36) circle (2pt);
\draw[color=black] (13.,3.25) node {$F$};
\end{scriptsize}
\end{tikzpicture}

    \caption{\label{fig:MS partition}Morgan-Scott partition and its TE-connected component}
\end{figure}

The spline space over a rectilinear partition $\Delta$ is defined as
$$S_d^{\mu}(\Delta) = \bigl\{ f \in C^{\mu}(\Psi) : f|_{\psi} \in \mathbb{P}_d,\ \forall \psi \in \mathscr{C} \bigr\},$$
where $\mathscr{C}$ is the set of all polygonal cells, $\Psi$ is the domain occupied by $\mathscr{C}$ and $\mathbb{P}_d$ is the space of polynomials of total degree at most $d$.
We now review key concepts of the smoothing cofactor method for such spline spaces; further details appear in~\cite{wang1975structural,wang2013multivariate,chui1983smooth}. By Bézout's theorem, the smoothness condition across an edge is equivalent to an algebraic relation, leading to the notion of \textbf{edge cofactor}.

\begin{definition}[\cite{wang1975structural}]
Let $\psi_1$ and $\psi_2$ be adjacent cells sharing an edge $e_{1,2}$ with linear equation $L_{1,2}(x,y) \in \mathbb{P}_1$. Then $f \in C^{\mu}(\overline{\psi_1 \cup \psi_2})$ if and only if there exists $q_{1,2}(x,y) \in \mathbb{P}_{d-\mu-1}$ such that
$$f|_{\psi_1} - f|_{\psi_2} = q_{1,2} \, L_{1,2}^{\mu+1}.$$
The polynomial $q_{1,2}$ is the \textbf{edge cofactor} of $e$ from $\psi_2$ to $\psi_1$.
\end{definition}

We now state the local conformality condition at an interior vertex and the global conformality conditions for the spline space $S_d^{\mu}(\Delta)$.

\begin{definition}\label{def conformality condition}
Let $A$ be an interior vertex of the rectilinear partition $\Delta$. For each interior edge $e_{i,j}$ incident to $A$ and shared by cells $\psi_i$ and $\psi_j$, orient the edge so that a counterclockwise traversal centered at $A$ crosses from $\psi_j$ to $\psi_i$.

\begin{itemize}
    \item The \textbf{local conformality condition} at $A$ is
    \begin{equation}\label{eq:local-conformality}
        \sum_{A} q_{i,j} L_{i,j}^{\mu+1} \equiv 0,
    \end{equation}
    where the sum runs over all interior edges incident to $A$, $q_{i,j}(x,y)$ is the smooth cofactor on $e_{i,j}$, and $L_{i,j}$ is the linear polynomial defining the line containing $e_{i,j}$.

    \item Let $A_v$, $v=1,\dots,V$, be all interior vertices. The \textbf{global conformality conditions} for $S_d^{\mu}(\Delta)$ are
    \begin{equation}\label{eq:global-conformality}
        \sum_{A_v} q_{i,j} L_{i,j}^{\mu+1} \equiv 0, \quad v=1,\dots,V.
    \end{equation}
\end{itemize}
\end{definition}

\begin{remark}
    By Definition~\ref{def conformality condition}, we have 
    $$q_{i,j}=-q_{j,i}.$$
\end{remark}

\begin{example}
Consider vertex $F$ in the Morgan--Scott partition $\Delta_{\mathrm{MS}}$ (Figure~\ref{fig:MS partition}). Let $\psi_i$, $i=1,2,3,4$, be the adjacent cells, with edge orientations and cofactors as shown in Figure~\ref{fig:conformality condition}. By Definition~\ref{def conformality condition}, the local conformality condition at $F$ is
\begin{equation*}
    q_{1,2} L_{DF}^{\mu+1} + q_{2,3} L_{EF}^{\mu+1} + q_{3,4} L_{BF}^{\mu+1} + q_{4,1} L_{FA}^{\mu+1} \equiv 0,
\end{equation*}
where $L_{DF} = L_{1,2}$ is the linear polynomial defining edge $DF$, and similarly for the others.
\end{example}

\begin{figure}[htbp]
    \centering
    \begin{tikzpicture}[line cap=round,line join=round,>=triangle 45,x=1cm,y=1cm,scale=0.9]
\draw [line width=1pt] (0.75,1)-- (8.16,2.3);
\draw [line width=1pt] (3.3,4.75)-- (4.94,4.25);
\draw [line width=1pt] (4.94,4.25)-- (3.95,3.15);
\draw [line width=1pt] (3.95,3.15)-- (3.3,4.75);
\draw [line width=1pt] (3.3,4.75)-- (0.75,1);
\draw [line width=1pt] (3.95,3.15)-- (0.75,1);
\draw [line width=1pt] (4.94,4.25)-- (8.16,2.3);
\draw [line width=1pt] (3.95,3.15)-- (8.16,2.3);
\draw [->,line width=1pt] (5.25,2.25) -- (5.8,3.5); 
\draw [->,line width=1pt] (5.28,3.54) -- (4.15,4.2);
\draw [->,line width=1pt] (4.,4.) -- (2.8,3.1); 
\draw [->,line width=1pt] (2.35,2.65) -- (3.32,2); 
\draw (4.,2.65) node[anchor=north west] {$\psi_4$};
\draw (5.8,3.5) node[anchor=north west] {$\psi_3$};
\draw (3.5,4.65) node[anchor=north west] {$\psi_2$};
\draw (2.25,3.25) node[anchor=north west] {$\psi_1$};
\begin{scriptsize}
\draw [fill=black] (0.75,1) circle (1.5pt);
\draw[color=black] (0.65,1.2) node {$A$};
\draw [fill=black] (8.16,2.3) circle (1.5pt);
\draw[color=black] (8.25,2.5) node {$B$};
\draw [fill=black] (3.3,4.75) circle (1.5pt);
\draw[color=black] (3.4,5.) node {$D$};
\draw [fill=black] (4.94,4.25) circle (1.5pt);
\draw[color=black] (5.08,4.5) node {$E$};
\draw [fill=black] (3.95,3.15) circle (1.5pt);
\draw[color=black] (4.05,3.5) node {$F$};
\draw[color=black] (5.18,3.15) node {$q_{3,4}$};
\draw[color=black] (4.6,3.5) node {$q_{2,3}$};
\draw[color=black] (3.3,4) node {$q_{1,2}$};
\draw[color=black] (3.45,2.5) node {$q_{4,1}$};
\end{scriptsize}
\end{tikzpicture}

    \caption{\label{fig:conformality condition}Local conformality condition around $F$}
\end{figure}

The following theorem characterizes membership in $S_d^{\mu}(\Delta)$ using edge cofactors and global conformality~\cite{wang1975structural,wang2013multivariate,chui1983multivariate}.

\begin{theorem}[\cite{wang1975structural,wang2013multivariate,chui1983multivariate}]\label{thm global conformality conditions for interior vertices}
A function $s$ belongs to $S_d^{\mu}(\Delta)$ if and only if there exist smooth cofactors $q_{i,j}$ for all interior edges $e_{i,j}$ such that the global conformality conditions \eqref{eq:global-conformality} hold for all interior vertices $A_v$, $v=1,\dots,V$.
\end{theorem}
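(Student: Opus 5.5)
We prove the two implications separately; both rest on the edge-cofactor characterization (the definition of the edge cofactor via Bézout's theorem) and on the fact that the partition is connected through its interior edges. Throughout, $s$ is piecewise polynomial with $s|_{\psi}=p_{\psi}\in\mathbb{P}_d$ on each cell $\psi\in\mathscr{C}$.

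\emph{Necessity.} Assume $s\in S_d^{\mu}(\Delta)$. For each interior edge $e_{i,j}$, the two cells $\psi_i,\psi_j$ share that edge, and $s$ is $C^{\mu}$ across it. By the edge cofactor definition there is a unique $q_{i,j}\in\mathbb{P}_{d-\mu-1}$ with $p_{\psi_i}-p_{\psi_j}=q_{i,j}L_{i,j}^{\mu+1}$. Uniqueness holds because $\mathbb{R}[x,y]$ has no zero divisors, and it gives $q_{j,i}=-q_{i,j}$.

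Now fix an interior vertex $A_v$. Go once around $A_v$ counterclockwise, through the cells $\psi_1,\psi_2,\dots,\psi_k,\psi_1$. The differences telescope:
$$\sum_{A_v}q_{i,j}L_{i,j}^{\mu+1}=\sum_{m=1}^{k}\bigl(p_{\psi_{m+1}}-p_{\psi_m}\bigr)=0,$$
with indices taken cyclically and oriented as in Definition~\ref{def conformality condition}. So every local condition \eqref{eq:local-conformality} holds, and therefore \eqref{eq:global-conformality} holds. One point must be checked: every edge incident to an interior vertex is an interior edge. This is true, so the cycle of cells closes up.

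\emph{Sufficiency.} Assume cofactors $q_{i,j}\in\mathbb{P}_{d-\mu-1}$ are given, with $q_{j,i}=-q_{i,j}$, and that they satisfy \eqref{eq:global-conformality}. We must produce polynomials $p_\psi$ with $p_{\psi_i}-p_{\psi_j}=q_{i,j}L_{i,j}^{\mu+1}$ on every interior edge. The converse direction of the edge cofactor definition then says that each such relation makes $s$ $C^{\mu}$ across $e_{i,j}$. Since $C^{\mu}$ is a local property and $s$ is smooth inside each cell, $s\in C^{\mu}(\Psi)$.

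Build the $p_\psi$ as follows.
\begin{enumerate}
\item Fix a base cell $\psi_0$ and an arbitrary $p_{\psi_0}\in\mathbb{P}_d$.
\item For any other cell $\psi$, choose a path of cells from $\psi_0$ to $\psi$ in which consecutive cells share an interior edge (the dual graph).
\item Define $p_\psi$ as $p_{\psi_0}$ plus the sum of the cofactor terms $q\,L^{\mu+1}$ along that path.
\end{enumerate}
This $p_\psi$ satisfies the required relations automatically, provided it does not depend on the chosen path.

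\emph{Main obstacle: path independence.} This is the same as showing that the sum of $q\,L^{\mu+1}$ around every closed cycle in the dual graph vanishes. The plan is to show that the dual cycle space is generated by the small cycles around the interior vertices. That holds because the domain $\Psi$ is assumed simply connected, with no holes. For a planar graph, the faces of the dual correspond to the vertices of $\Delta$. Faces at boundary vertices do not give closed cycles in the dual restricted to interior edges, so the remaining faces, one around each interior vertex, span all cycles.

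The cycle around an interior vertex $A_v$ has zero sum exactly by the $v$-th condition in \eqref{eq:global-conformality}. Hence the sum over any cycle is zero, each $p_\psi$ is well defined, and $p_\psi\in\mathbb{P}_d$ because $\deg q+\mu+1\le d$.

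One subtlety needs care. In a rectilinear partition a vertex may lie in the interior of a straight side of a cell (a T-type vertex). Such a vertex should still be treated as a vertex of the planar graph, with the collinear edges on opposite sides carrying the same $L$. Doing so keeps the face–vertex correspondence valid.
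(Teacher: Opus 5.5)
Your proof is correct. The paper gives no proof of this theorem; it quotes the result from Wang and Chui--Wang. Your argument is the classical proof in those sources: telescoping of $p_{\psi_{m+1}}-p_{\psi_m}$ around each interior vertex for necessity, and for sufficiency the ``source cell'' construction, with path independence guaranteed by the interior-vertex conditions on a simply connected domain. Your sufficiency direction in fact gives the stronger structural statement that every conformal cofactor system comes from a spline, unique up to $\mathbb{P}_d$. This is what the decomposition $S_d^\mu(\Delta)=\mathbb{P}_d\oplus\mathbb{H}$ in the proof of Theorem~\ref{thm:main} relies on.

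The one step you state loosely is the cycle-space claim. It is cleanest to argue as follows. The dual graph restricted to interior edges is a connected plane graph. Deleting the outer-face dual vertex merges all faces belonging to boundary vertices into the single unbounded face. Connectedness of $\partial\Psi$ rules out any further bounded face; a hole would create one. So the bounded faces are exactly the stars of the interior vertices, and these span the cycle space.
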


Global conformality conditions are more complex than local ones because, in general rectilinear partitions, an interior edge may connect two interior vertices. For instance, in the Morgan--Scott partition, edge $DE$ is incident to vertices $D$ and $E$, so the edge cofactor of $DE$ denoted as $q_{DE}$ appears in the local conformality conditions at both $D$ and $E$. This coupling increases the difficulty of enforcing global conformality conditions.

\section{A framework for calculating dimensions of spline spaces over rectilinear partitions}\label{section3}
This section extends the framework for dimension calculation of spline spaces over T-mesh to rectilinear partitions via a decoupling strategy. Specifically, the global conformality conditions for the spline space $  S_d^\mu(\Delta)  $ are decomposed into local conformality conditions at each interior vertex together with cofactor constraints along truncated $  l  $-edges. As a result, the dimension of the spline space can be reduced to the analysis of the corresponding conformality spaces over the TE-connected components of the rectilinear partition.

Two central issues must be addressed in this extension:
\begin{itemize}
    \item How to transform the global conformality conditions into equivalent conditions formulated over the TE-connected components;
    \item How to handle conformality conditions for arbitrary smoothness orders $  \mu  $ (in particular, when $  \mu \neq d-1  $).
\end{itemize}

We resolve these issues in the following manner. First, building on the global conformality conditions at all interior vertices presented in Theorem~\ref{thm global conformality conditions for interior vertices}, we introduce the new notion of decoupled cofactors. Using this concept, the original global conformality conditions are equivalently reformulated as a set of decoupled cofactor based global conformality conditions, one for each TE-connected component.
Second, when the smoothness order $  \mu  $ is strictly less than the maximum possible value $  d-1  $, the resulting conformality conditions on decoupled cofactors over TE-connected component are no longer represented by a single numerical matrix. To overcome this difficulty, we apply a homogeneous polynomial decomposition. This approach further decomposes the decoupled cofactor conformality conditions into independent conformality conditions on each homogeneous component. Each of these component-wise conditions can then be expressed as a computable numerical matrix.
Through these two steps, we establish a complete and systematic framework for computing the dimension of spline spaces over rectilinear partitions at arbitrary smoothness orders $  \mu  $. The relevant concepts, theorems, and computational procedures are presented in detail in the subsequent subsections.

\subsection{Global conformality conditions for TE-connected components}
We propose a decoupling strategy that replaces each coupled edge cofactor in the global conformality conditions with independent cofactors at its endpoints, supplemented by consistency constraints.

\begin{definition}\label{decoupled edge cofactor}
Let $e$ be an interior edge of the rectilinear partition $\Delta$ with endpoints $A_1$ and $A_2$, and let $q_e$ be its cofactor in the global conformality conditions \eqref{eq:global-conformality}. The \textbf{decoupled edge cofactor} of $e$ is defined as follows:
\begin{itemize}
\item If one endpoint is on the boundary, say $A_1$ is on the boundary, then $q_e^{(A_2)} := q_e$.
\item If both endpoints are interior vertices, the decoupled cofactor is the pair $(q_e^{(A_1)}, q_e^{(A_2)})$, where $q_e^{(A_i)}$ is the contribution of $e$ to the local conformality condition at $A_i$, $i=1,2$.
\end{itemize}
\end{definition}

\begin{example}
In the Morgan--Scott partition (Figure~\ref{fig:MS partition}),
\begin{itemize}
    \item For edges $CD$, $CE$, $AD$, $AF$, $BF$, $BE$, each has one boundary endpoint, so their decoupled cofactors are $q_{CD}^{(D)}$, $q_{CE}^{(E)}$, $q_{AD}^{(D)}$, $q_{AF}^{(F)}$, $q_{BF}^{(F)}$, and $q_{BE}^{(E)}$;
    \item edges $DE$, $DF$, $EF$ have two interior endpoints, so their decoupled cofactors are the pairs $(q_{DE}^{(D)}, q_{DE}^{(E)})$, $(q_{DF}^{(D)}, q_{DF}^{(F)})$, and $(q_{EF}^{(E)}, q_{EF}^{(F)})$, respectively.
\end{itemize}
\end{example}

\begin{lemma}\label{lem: decoupled charcterastic}
If an interior edge $e$ has two interior endpoints $A_1$ and $A_2$, then
$$
q_e^{(A_1)} = - q_e^{(A_2)}.
$$
Thus, one of the decoupled cofactors equals to $q_e$ and the other is $-q_e$.
\end{lemma}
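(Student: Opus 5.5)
The plan is to unwind Definition~\ref{def conformality condition}, which fixes the orientation convention for edge cofactors at each interior vertex, and combine it with the antisymmetry $q_{i,j}=-q_{j,i}$ from the Remark following that definition. Let $e$ be shared by the cells $\psi_i$ and $\psi_j$, with interior endpoints $A_1$ and $A_2$. The decoupled cofactor $q_e^{(A_k)}$ is, by Definition~\ref{decoupled edge cofactor}, the cofactor with which $e$ enters the local conformality condition \eqref{eq:local-conformality} at $A_k$. That cofactor is oriented by a counterclockwise traversal centred at $A_k$. So the whole task is to determine which ordered pair ($q_{i,j}$ or $q_{j,i}$) a counterclockwise traversal around $A_1$ selects, compared with one around $A_2$.

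The key geometric step goes as follows.
\begin{itemize}
\item Place $e$ on the line $L_e=0$ and parametrize it as $A_1+t(A_2-A_1)$ for $t\in[0,1]$.
\item Suppose, without loss of generality, that $\psi_i$ lies on the left of the directed segment $A_1\to A_2$ and $\psi_j$ on its right.
\item A small counterclockwise circle around $A_1$ crosses the ray from $A_1$ toward $A_2$ moving from the right side to the left side. It therefore passes from $\psi_j$ into $\psi_i$, so $e$ contributes $q_{i,j}L_e^{\mu+1}$ at $A_1$.
\item Around $A_2$, the outgoing ray points toward $A_1$, i.e.\ in the reversed direction, which swaps left and right. A counterclockwise circle around $A_2$ therefore crosses from $\psi_i$ into $\psi_j$, so $e$ contributes $q_{j,i}L_e^{\mu+1}$ at $A_2$.
\end{itemize}
One can verify this cleanly with the sign of the cross product: the tangent of a counterclockwise circle at the crossing point, paired with the outward direction $\pm(A_2-A_1)$, gives opposite signs at the two endpoints.

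Given this, $q_e^{(A_1)}=q_{i,j}$ and $q_e^{(A_2)}=q_{j,i}=-q_{i,j}$ by the Remark, which is the claim. Since the same power $L_e^{\mu+1}$ appears at both ends, no rescaling of the line equation interferes. Identifying $q_e$ with $q_{i,j}$, one of the pair equals $q_e$ and the other $-q_e$. The well-definedness of the pair relies on Theorem~\ref{thm global conformality conditions for interior vertices}: a single cofactor $q_{i,j}$, uniquely determined by $s|_{\psi_i}-s|_{\psi_j}=q_{i,j}L_e^{\mu+1}$ because $\mathbb{P}_d$ is a domain, is used in both vertex conditions.

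The only real obstacle is making the orientation argument rigorous rather than pictorial, since Definition~\ref{def conformality condition} is phrased informally. I would handle it with the local-coordinate computation above: the side of $L_e$ on which a cell lies is fixed by the sign of $L_e$, and the traversal direction flips because the outward direction from the vertex reverses.
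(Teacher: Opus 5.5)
Your proposal is correct and takes essentially the same route as the paper: the counterclockwise traversals centred at $A_1$ and $A_2$ cross $e$ in opposite directions. Hence $e$ enters the two local conditions as $q_{i,j}L_e^{\mu+1}$ and $q_{j,i}L_e^{\mu+1}$ with $q_{j,i}=-q_{i,j}$, which is the paper's orientation argument made explicit. One wording fix in your cross-product check: the sign flip shows up when each circle's tangent is compared with the fixed direction $A_2-A_1$, whereas against the respective outward directions $\pm(A_2-A_1)$ both tangents give the same sign, which is just what counterclockwise means.
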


\begin{proof}
The local conformality condition at each vertex is defined using a counterclockwise orientation around that vertex (Definition~\ref{def conformality condition} and Figure~\ref{fig:opposite edge cofactor}). For the edge $e$, the counterclockwise direction at $A_1$ is opposite to that at $A_2$. Therefore, the two contributions $q_e^{(A_1)}$ and $q_e^{(A_2)}$ have opposite signs but the same magnitude as the original global cofactor $q_e$. This yields $q_e^{(A_1)} = - q_e^{(A_2)}$.

$\Box$
\end{proof}

\begin{figure}[htbp]
\centering
\begin{tikzpicture}[thick, >=latex, font=\small, scale=0.8]

  \draw (-2.8,-1.4) -- (3.2,1.6) node[midway,above=8pt,sloped] {$e$};

  \fill (-2.8,-1.4) circle(2pt) node[below left=0pt] {$A_1$};
  \fill (3.2,1.6)   circle(2pt) node[above right=0pt] {$A_2$};

  \draw[red!80!black, very thick, ->] 
        (-2.8,-1.4) ++(0:0.9) arc[start angle=0, end angle=360, radius=0.9];

  \draw[->, red!80!black, line width=1.2pt]
        (-2.8,-1.4) ++(1.2,0.9) -- ++(-0.6,1.2);

  \draw[blue!80!black, very thick, ->] 
        (3.2,1.6) ++(0:0.9) arc[start angle=0, end angle=360, radius=0.9];

  \draw[->, blue!80!black, line width=1.2pt]
        (3.2,1.6) ++(-1.3,-0.8) -- ++(0.6,-1.2);

\end{tikzpicture}
\caption{\label{fig:opposite edge cofactor}Opposite local orientations of edge $e$ at endpoints $A_1$ and $A_2$.}

\end{figure}

By Lemma~\ref{lem: decoupled charcterastic}, the decoupled cofactors split each shared edge cofactor $q_e$ into contributions at its two endpoints. For edges with both interior endpoints, the cofactors at the two ends are negatives of each other, but both are fully determined by the original $q_e$. Thus, Definition~\ref{decoupled edge cofactor} is well-defined.

We now recall the dimension of the conformality space at a single interior vertex. The following result gives the dimension when all incident edges have distinct directions~\cite{schumaker1979dimension}.

\begin{lemma}[\cite{schumaker1979dimension}]\label{lem:local dimension}
Let an interior vertex $A$ be surrounded by $n$ edges defined by distinct lines $L_i(x,y)=\alpha_i x + \beta_i y = 0$, $i=1,\dots,n$, with $\alpha_{i_1}\beta_{i_2} - \alpha_{i_2}\beta_{i_1} \neq 0$ for $i_1 \neq i_2$. The conformality vector space at $A$ is
$$
\textsf{CVS}[A] := \left\{(q_1,\dots,q_n) : \sum_{i=1}^n q_i L_i^{\mu+1} \equiv 0,\ q_i \in \mathbb{P}_{d-\mu-1}\right\}.
$$
Then
$$
\dim \textsf{CVS}[A] = \sum_{j=1}^{d-\mu} \bigl(n(d-\mu-j+1) - (d-j+2)\bigr)_+,
$$
where $u_+ = \max\{0,u\}$.
\end{lemma}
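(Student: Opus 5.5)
We prove the statement by splitting the single polynomial identity into its homogeneous parts and computing the dimension of each part separately. Translate so that $A$ is the origin; then each $L_i$ is a linear form. Write every cofactor as $q_i=\sum_{k=0}^{d-\mu-1} q_i^{[k]}$, where $q_i^{[k]}$ is homogeneous of degree $k$. Since $L_i^{\mu+1}$ is homogeneous of degree $\mu+1$, the identity $\sum_i q_iL_i^{\mu+1}\equiv 0$ holds exactly when, for every $k$,
\begin{equation*}
\sum_{i=1}^n q_i^{[k]}L_i^{\mu+1}\equiv 0 .
\end{equation*}
Hence $\textsf{CVS}[A]$ is the direct sum of the solution spaces $V_k$ of these degree-$k$ systems. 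Reindexing with $j=d-\mu-k$, so that $j=1,\dots,d-\mu$, it suffices to show
\begin{equation*}
\dim V_k=\bigl(n(k+1)-(k+\mu+2)\bigr)_+ ,
\end{equation*}
because $k+1=d-\mu-j+1$ and $k+\mu+2=d-j+2$.

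Fix $k$ and consider the linear map
\begin{equation*}
\Phi_k:(\mathbb{H}_k)^n\to\mathbb{H}_{k+\mu+1},\qquad (p_1,\dots,p_n)\mapsto \sum_i p_iL_i^{\mu+1},
\end{equation*}
where $\mathbb{H}_m$ denotes the space of binary forms of degree $m$, of dimension $m+1$. Then $V_k=\ker\Phi_k$, so $\dim V_k=n(k+1)-\operatorname{rank}\Phi_k$. The claim is therefore equivalent to
\begin{equation*}
\operatorname{rank}\Phi_k=\min\{n(k+1),\,k+\mu+2\},
\end{equation*}
that is, $\Phi_k$ has maximal rank. The image of $\Phi_k$ is the degree-$(k+\mu+1)$ component of the ideal $(L_1^{\mu+1},\dots,L_n^{\mu+1})$, equivalently the span of the forms $x^aL_i^{\mu+1}$ with $0\le a\le k$.

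The main obstacle is proving this maximal rank. I would use apolarity (Macaulay inverse systems). Under the apolarity pairing, the orthogonal complement of the image consists of the degree-$(k+\mu+1)$ forms $F$ annihilated by every $x^a\partial_{L_i}^{\mu+1}$. The condition $\partial_{L_i}^{\mu+1}F=0$ says that $F$ vanishes to order $\ge k+1$ at the point $[\beta_i:-\alpha_i]$ dual to $L_i$. These $n$ points of $\mathbb{P}^1$ are distinct precisely because of the hypothesis $\alpha_{i_1}\beta_{i_2}-\alpha_{i_2}\beta_{i_1}\ne 0$.

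A binary form of degree $m=k+\mu+1$ vanishing to order $k+1$ at $n$ distinct points is divisible by $\prod_i \ell_i^{k+1}$. Such forms exist exactly when $n(k+1)\le m$, and then they make up a space of dimension $m+1-n(k+1)$; otherwise only $F=0$ qualifies. Therefore
\begin{equation*}
\operatorname{rank}\Phi_k=(m+1)-\bigl(m+1-n(k+1)\bigr)_+=\min\{n(k+1),\,k+\mu+2\}.
\end{equation*}

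As a more elementary alternative to apolarity, one could show that the generalized Vandermonde (Hermite-interpolation) matrix built from the coefficients of $x^{a}y^{k-a}L_i^{\mu+1}$ has full rank, by dehomogenizing with $y=1$ and reducing to confluent interpolation at the distinct slopes $-\alpha_i/\beta_i$. The case $\beta_i=0$ would need to be handled by a rotation of coordinates.

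Summing $\dim V_k$ over $k=0,\dots,d-\mu-1$ then gives the stated formula.
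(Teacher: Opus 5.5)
Your proof is correct, and its skeleton is the one the paper relies on. The paper does not prove this lemma itself. In Section~3.2 it recalls from Schumaker exactly your two ingredients: the splitting $\textsf{CVS}[A]=\bigoplus_{j}\textsf{CVS}_j[A]$ by homogeneous degree, and the fact that $\textsf{CVS}_j[A]$ is the kernel of a matrix $Q=(Q_1,\dots,Q_n)$ of rank $\min\{n(d-\mu-j+1),\,d-j+2\}$. Your $\Phi_k$ with $k=d-\mu-j$ is precisely $Q$, written (up to the factor $\beta_i^{\mu+1}$) in the cofactor basis $x^a(\gamma_i x+y)^{k-a}$, $0\le a\le k$.

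The difference is in how maximal rank is justified. In the matrix route, the columns of $Q_i$ are, up to scalars, the derivatives of orders $0,\dots,k$ of the vector $\bigl(\binom{m}{r}\gamma^r\bigr)_{r=0}^{m}$, $m=d-j+1$, evaluated at $\gamma=\gamma_i=\alpha_i/\beta_i$. Full rank is then a confluent-Vandermonde (Hermite interpolation) statement at distinct nodes --- essentially your \emph{elementary alternative} --- and it tacitly requires $\beta_i\neq 0$. Your apolarity argument instead identifies the annihilator of the image with the degree-$m$ forms divisible by $\prod_i\ell_i^{k+1}$. This is coordinate-free, needs no rotation for vertical edges, and makes clear that only pairwise non-proportionality of the $L_i$ matters. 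The matrix route, in exchange, produces the explicit numerical systems whose null spaces supply the bases used in Algorithm~\ref{alg:matrix}.

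There are two cosmetic slips. First, the image is spanned by $x^ay^{k-a}L_i^{\mu+1}$ rather than $x^aL_i^{\mu+1}$. Second, with the standard pairing $x\mapsto\partial_X$, $y\mapsto\partial_Y$, the condition $\partial_{L_i}^{\mu+1}F=0$ means $(\beta_iX-\alpha_iY)^{k+1}$ divides $F$, i.e.\ vanishing at $[\alpha_i:\beta_i]$ rather than $[\beta_i:-\alpha_i]$. Neither affects the count, since distinctness of the $n$ points is all that is used.
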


Lemma~\ref{lem:local dimension} assumes distinct edge directions. When collinear edges cross $A$, the following corollary applies.

\begin{corollary}\label{cor:local dimension}
Suppose $2m \le n$ edges around vertex $A$ consist of $m$ pairs of collinear edges with the same line equation ($L_{2i-1} = L_{2i}$, $i=1,\dots,m$) and $n-2m$ edges with distinct directions. Define
$$
W[A] := \left\{(q_1+q_2,\dots,q_{2m-1}+q_{2m},q_{2m+1},\dots,q_n) : \sum_{i=1}^n q_i L_i^{\mu+1} \equiv 0,\ q_i \in \mathbb{P}_{d-\mu-1}\right\}.
$$
Then
$$
\dim W[A] = \sum_{j=1}^{d-\mu} \bigl((n-m)(d-\mu-j+1) - (d-j+2)\bigr)_+.
$$
\end{corollary}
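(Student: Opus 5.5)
The plan is to reduce the corollary to Lemma~\ref{lem:local dimension} by merging each collinear pair into a single edge. The key observation is that $L_{2i-1}=L_{2i}$, so in the conformality equation the two terms combine:
$$q_{2i-1}L_{2i-1}^{\mu+1}+q_{2i}L_{2i}^{\mu+1}=(q_{2i-1}+q_{2i})L_{2i-1}^{\mu+1}.$$
Setting $p_i:=q_{2i-1}+q_{2i}$ for $i=1,\dots,m$ and $p_{m+k}:=q_{2m+k}$ for $k=1,\dots,n-2m$, the condition $\sum_{i=1}^n q_iL_i^{\mu+1}\equiv 0$ becomes
$$\sum_{i=1}^{m}p_iL_{2i-1}^{\mu+1}+\sum_{k=1}^{n-2m}p_{m+k}L_{2m+k}^{\mu+1}\equiv 0.$$
This is a conformality equation with $n-m$ terms.

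The second step is to identify $W[A]$ exactly with $\textsf{CVS}$ of a virtual vertex. This vertex has $n-m$ incident lines $L_1,L_3,\dots,L_{2m-1},L_{2m+1},\dots,L_n$.

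For the inclusion $W[A]\subseteq$ this space, note that any vector in $W[A]$ is, by the computation above, a tuple $(p_1,\dots,p_{n-m})$ with $p_i\in\mathbb{P}_{d-\mu-1}$ satisfying the reduced equation.

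For the reverse inclusion, take any solution $(p_1,\dots,p_{n-m})$ of the reduced equation. Choose $q_{2i-1}=p_i$, $q_{2i}=0$, and $q_{2m+k}=p_{m+k}$. This gives a solution of the original equation whose image in $W[A]$ is the given tuple.

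Hence $W[A]$ is exactly $\textsf{CVS}$ of the reduced configuration. Note that $W[A]$ is defined as the image of a linear map, so it is indeed a linear space, and no kernel bookkeeping is needed.

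The third step is to check the hypothesis of Lemma~\ref{lem:local dimension}: the $n-m$ lines must have pairwise distinct directions. Two facts must hold:
\begin{itemize}
\item the $m$ paired lines are mutually distinct;
\item they differ from the $n-2m$ unpaired lines, and the unpaired lines differ from one another.
\end{itemize}
Both are part of the assumption that the remaining edges "have distinct directions", read as: the only coincidences among the lines through $A$ are the $m$ designated pairs. All lines pass through $A$, so distinct directions means the determinant condition $\alpha_{i_1}\beta_{i_2}-\alpha_{i_2}\beta_{i_1}\neq0$ holds. (Lines are taken relative to $A$ as origin, as in the lemma. Translation does not affect dimensions.)

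Applying Lemma~\ref{lem:local dimension} with $n$ replaced by $n-m$ then gives the stated formula.

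The main obstacle is only this interpretive point: making precise that no further collinearities occur, so that the reduced family satisfies the lemma's pairwise non-parallel hypothesis. A pair of edges through an interior vertex with the same line is exactly a straight line passing through $A$, and two such lines through the same point with the same direction would coincide. So the hypothesis should be stated as: the $n-m$ distinct lines through $A$ are pairwise non-parallel.
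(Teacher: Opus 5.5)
Your proposal is correct and follows the paper's proof. Both merge each collinear pair via $p_i=q_{2i-1}+q_{2i}$, observe that the equation becomes a conformality condition on $n-m$ distinct lines, and apply Lemma~\ref{lem:local dimension} with $n-m$ in place of $n$; your explicit reverse inclusion (taking $q_{2i-1}=p_i$, $q_{2i}=0$) and your check of the distinct-directions hypothesis fill in details the paper leaves implicit.
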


\begin{proof}
The condition $\sum_{i=1}^n q_i L_i^{\mu+1} \equiv 0$ becomes
$$
\sum_{i=1}^m (q_{2i-1}+q_{2i}) L_{2i}^{\mu+1} + \sum_{i=2m+1}^n q_i L_i^{\mu+1} \equiv 0.
$$
Set $p_i = q_{2i-1}+q_{2i}$ ($i=1,\dots,m$) and $p_{m+i} = q_{2m+i}$ ($i=1,\dots,n-2m$). The equation reduces to a conformality condition with $n-m$ distinct lines. Applying Lemma~\ref{lem:local dimension} yields the result.

$\Box$
\end{proof}

Using Corollary~\ref{cor:local dimension}, we now derive the additional conformality constraint imposed on the decoupled cofactors along a truncated $l$-edge.

\begin{lemma}\label{lem global condition in one truncated line}
Let $l$ be a truncated $l$-edge in $\Delta$ with vertices $A_1,\dots,A_r$ ordered along $l$, where $A_1$ and $A_r$ are the endpoints of $l$ (Figure~\ref{fig:global condition in one truncated line}). For each interior segment $A_iA_{i+1}$ ($i=1,\dots,r-1$), let the decoupled cofactor be $(q_i^{(A_i)}, q_i^{(A_{i+1})})$. Define
$$
p_i^{(A_i)} = q_{i-1}^{(A_i)} + q_i^{(A_i)}, \quad i=1,\dots,r,
$$
with the convention $q_0^{(A_1)} := 0$ and $q_r^{(A_r)} := 0$. The global conformality condition along $l$ is
\begin{equation}\label{eq:global condition in one truncated line}
    \sum_{i=1}^r p_i^{(A_i)} \equiv 0.
\end{equation}
\end{lemma}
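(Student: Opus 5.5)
The plan is to prove \eqref{eq:global condition in one truncated line} by telescoping, using only the antisymmetry of Lemma~\ref{lem: decoupled charcterastic} and the definitions of decoupled and grouped cofactors.

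Since every vertex $A_1,\dots,A_r$ of a truncated $l$-edge is interior, each segment $A_iA_{i+1}$ ($1\le i\le r-1$) is an interior edge with two interior endpoints. By Lemma~\ref{lem: decoupled charcterastic}, its decoupled cofactor pair satisfies
$$
q_i^{(A_i)} = -\,q_i^{(A_{i+1})}.
$$
Summing the grouped cofactors and splitting each $p_i^{(A_i)}$ into its two halves gives
$$
\sum_{i=1}^r p_i^{(A_i)} = \sum_{i=1}^{r} q_{i-1}^{(A_i)} + \sum_{i=1}^{r} q_i^{(A_i)} = \sum_{i=1}^{r-1} q_i^{(A_{i+1})} + \sum_{i=1}^{r-1} q_i^{(A_i)}.
$$
The boundary terms $q_0^{(A_1)}$ and $q_r^{(A_r)}$ drop out by the stated convention. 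Each segment $A_iA_{i+1}$ contributes exactly the pair $q_i^{(A_i)}+q_i^{(A_{i+1})}$, which vanishes by the relation above, so the whole sum is identically zero.

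One point needs care: we must confirm that $p_i^{(A_i)}$ is exactly the combined coefficient of $L_l^{\mu+1}$ at $A_i$ used in Corollary~\ref{cor:local dimension}. At an intermediate vertex $A_i$, the two collinear edges $A_{i-1}A_i$ and $A_iA_{i+1}$ share the line $L_l$. In the local condition \eqref{eq:local-conformality} they therefore appear with the common factor $L_l^{\mu+1}$, and their cofactors add to $p_i^{(A_i)}$, precisely as in the grouping $q_{2i-1}+q_{2i}$ of that corollary. At the endpoints $A_1$ and $A_r$ only one segment of $l$ is incident, so $p_1^{(A_1)}=q_1^{(A_1)}$ and $p_r^{(A_r)}=q_{r-1}^{(A_r)}$.

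The main subtlety is the sign bookkeeping. The counterclockwise convention of Definition~\ref{def conformality condition} is applied at each vertex separately, so the same global cofactor $q_{A_iA_{i+1}}$ enters with opposite signs at $A_i$ and at $A_{i+1}$. This is exactly what Lemma~\ref{lem: decoupled charcterastic} records. The identity \eqref{eq:global condition in one truncated line} is then the consistency constraint that replaces the coupling of the original global cofactors once they are decoupled.

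For the converse direction, I would note that the decoupled cofactors along $l$ are determined by the $p_i^{(A_i)}$ through partial sums: $q_i^{(A_i)}=\sum_{k\le i}p_k^{(A_k)}$ and $q_i^{(A_{i+1})}=-q_i^{(A_i)}$. Condition \eqref{eq:global condition in one truncated line} is exactly what makes the last partial sum vanish, matching $q_r^{(A_r)}=0$. This shows that \eqref{eq:global condition in one truncated line} is not only necessary but also sufficient for the decoupled data to come from genuine edge cofactors.
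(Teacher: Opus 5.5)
Your proof is correct and matches the paper's argument: you split each $p_i^{(A_i)}$ into its two segment contributions, drop the conventional boundary terms, and cancel each pair $q_i^{(A_i)}+q_i^{(A_{i+1})}$ using the antisymmetry of Lemma~\ref{lem: decoupled charcterastic}. Your partial-sum converse goes beyond the paper's proof, which shows only necessity. It shows that $\sum_i p_i^{(A_i)}\equiv 0$ is also sufficient to recover uniquely determined segment cofactors, which is exactly what the paper uses without proof in Theorem~\ref{thm:main} when it says cofactors on truncated $l$-edges are determined by the $p_i^{(A_i)}$.
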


\begin{proof}
By Corollary~\ref{cor:local dimension}, the local conformality condition at each $A_i$ determines the sum $p_i^{(A_i)}$. Summing over all vertices on $l$ gives
$$
\sum_{i=1}^r p_i^{(A_i)} = q_0^{(A_1)} + \sum_{i=1}^{r-1} (q_i^{(A_i)} + q_i^{(A_{i+1})}) + q_r^{(A_r)}.
$$
The boundary terms vanish, and Lemma~\ref{lem: decoupled charcterastic} yields $q_i^{(A_i)} = -q_i^{(A_{i+1})}$ for each interior segment. Thus, the sum is identically zero.

$\Box$
\end{proof}

\begin{figure}
    \centering
    \begin{tikzpicture}[line cap=round,line join=round,>=triangle 45,x=1cm,y=1cm,scale=0.8]
\draw [line width=1pt] (0,6)-- (12,0);
\draw [line width=1pt] (0,4)-- (4,6);
\draw [line width=1pt] (4,4)-- (2,1);
\draw [line width=1pt] (8,2)-- (7,5);
\draw [line width=1pt] (8,2)-- (5,1);
\draw [line width=1pt] (10,1)-- (12,2);
\draw [line width=1pt] (10,1)-- (10,3);
\draw [line width=1pt] (10,1)-- (9,-1);
\draw (-0.7,5.9) node[anchor=north west] {$p_{1}^{(A_1)}$};
\draw (1.6,4.8) node[anchor=north west] {$p_{2}^{(A_2)}$};
\draw (3.5,3.8) node[anchor=north west] {$p_{3}^{(A_3)}$};
\draw (7.,1.8) node[anchor=north west] {$p_{r-2}^{(A_{r-2})}$};
\draw (9.6,0.8) node[anchor=north west] {$p_{r-1}^{(A_{r-1})}$};
\draw (11.,0.1) node[anchor=north west] {$p_{r}^{(A_{r})}$};
\draw (5.5,4.) node[anchor=north west, rotate=-26.565] {\dots};
\begin{scriptsize}
\draw [fill=black] (0,6) circle (2.5pt);
\draw[color=black] (0.2,6.4) node {$A_1$};
\draw [fill=black] (12,0) circle (2.5pt);
\draw[color=black] (12.2,0.4) node {$A_r$};
\draw [fill=black] (2,5) circle (2.5pt);
\draw[color=black] (2.,5.4) node {$A_2$};
\draw [fill=black] (4,4) circle (2.5pt);
\draw[color=black] (4.2,4.35) node {$A_3$};
\draw [fill=black] (8,2) circle (2.5pt);
\draw[color=black] (8.35,2.4) node {$A_{r-2}$};
\draw [fill=black] (10,1) circle (2.5pt);
\draw[color=black] (10.4,1.4) node {$A_{r-1}$};
\end{scriptsize}
\end{tikzpicture}

    \caption{\label{fig:global condition in one truncated line}Global conformality condition for a truncated $l$-edge.}
    
\end{figure}

Lemma~\ref{lem global condition in one truncated line} states that solving the local conformality conditions independently at each vertex introduces one linear constraint per truncated $l$-edge.

\begin{definition}
Let $TE(\Delta)$ be the TE-connected component of $\Delta$, consisting of truncated $l$-edges $l_1,\dots,l_t$. The conformality vector space of $TE(\Delta)$ is
$$
\textsf{CVS}[TE(\Delta)] := \bigl\{ \boldsymbol{p} = (p_1^{(A_1)},\dots,p_v^{(A_v)}) : \mathscr{Q}_{l_i}(\boldsymbol{p}) \equiv 0,\ i=1,\dots,t \bigr\},
$$
where $\mathscr{Q}_{l_i} \equiv 0$ is the global conformality condition \eqref{eq:global condition in one truncated line} along $l_i$ and $v$ is the number of vertices on $TE(\Delta)$.
\end{definition}

The global conditions $\mathscr{Q}_{l_i}(\boldsymbol{p}) \equiv 0,\ i=1,\dots,t$ form a homogeneous linear system. Its coefficient matrix is called the \textbf{conformality matrix}, and denoted by $M(TE(\Delta))$.

The main result is the following dimension formula.

\begin{theorem}\label{thm:main}
For any rectilinear partition $\Delta$,
\begin{equation}\label{eq:dimension formula}
\dim S_d^{\mu}(\Delta) = \binom{d+2}{2} + c\binom{d-\mu+1}{2} + \sum_{i=1}^V k_d^{\mu}(N_i) - \mathrm{rank}( M(TE(\Delta))),    
\end{equation}
where
\begin{itemize}
    \item $c$ is the number of cross-cuts,
    \item $V$ is the number of interior vertices,
    \item $N_i$ is the number of distinct edge directions incident to interior vertex $A_i$,
    \item $k_d^{\mu}(N_i) = \sum_{j=1}^{d-\mu} \bigl(N_i(d-\mu-j+1) - (d-j+2)\bigr)_+$.
\end{itemize}
\end{theorem}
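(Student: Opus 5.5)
We will prove the formula by building the linear isomorphism of Theorem~\ref{thm global conformality conditions for interior vertices} explicitly and then counting dimensions. First, we fix a source cell $\psi_0$. Every $s\in S_d^{\mu}(\Delta)$ is then determined by $s|_{\psi_0}\in\mathbb{P}_d$ together with the edge cofactors $q_e$ on all interior edges. These cofactors must satisfy the global conformality conditions \eqref{eq:global-conformality}. Hence
$$\dim S_d^\mu(\Delta)=\binom{d+2}{2}+\dim\mathcal{Q},$$
where $\mathcal{Q}$ is the space of cofactor tuples $(q_e)_e$ satisfying \eqref{eq:global-conformality}. The work is to compute $\dim\mathcal{Q}$.

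Next, we partition the interior edges into maximal collinear groups: cross-cuts, rays, and truncated $l$-edges. We first treat the cross-cuts. Along a cross-cut, the cofactors jump at the vertices it passes through. Choosing one extra cofactor per cross-cut, constant in the sense of the Wang/Chui construction, contributes a free $\mathbb{P}_{d-\mu-1}$ term $qL^{\mu+1}$. These terms account for $c\binom{d-\mu+1}{2}$ dimensions. The remaining freedom is then local at each vertex, after subtracting the cross-cut contribution as is done in the classical proof of Chui--Wang.

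At each interior vertex $A_i$, we use decoupled cofactors (Definition~\ref{decoupled edge cofactor}) and Corollary~\ref{cor:local dimension}. The local solution space, written in terms of the merged cofactors $p$ (sums over collinear pairs), has dimension $k_d^\mu(N_i)$, where $N_i$ is the number of distinct directions. We then justify that the lumped variables $p$ capture everything. The individual split of $p$ into $q_{i-1}^{(A_i)}+q_i^{(A_i)}$ along a line passing through $A_i$ is determined by propagation along that line from its endpoints. This propagation works as follows:
\begin{itemize}
\item On a ray, the boundary endpoint forces the cofactor there, and we recursively solve inward. This imposes no constraint.
\item On a cross-cut, the propagation leaves exactly the one free parameter already counted.
\item On a truncated $l$-edge, propagating from $A_1$ to $A_r$ is consistent if and only if the propagated value at the far end vanishes. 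By Lemma~\ref{lem: decoupled charcterastic} and Lemma~\ref{lem global condition in one truncated line}, this condition is exactly $\mathscr{Q}_{l}(\boldsymbol p)\equiv0$.
\end{itemize}

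Consequently, $\mathcal{Q}$ is isomorphic to the direct sum of the cross-cut parameters and the subspace of $\bigoplus_i \textsf{CVS}$-type local spaces (of total dimension $\sum_i k_d^\mu(N_i)$) cut out by the linear conditions $\mathscr{Q}_{l_j}\equiv0$, $j=1,\dots,t$. This subspace is the kernel of the map defined by $M(TE(\Delta))$, with the matrix entries taken relative to bases of the local solution spaces. Rank–nullity then gives the subtraction of $\mathrm{rank}(M(TE(\Delta)))$, which yields \eqref{eq:dimension formula}.

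The main obstacle is the bijectivity step. We must show that every choice of local solutions satisfying the truncated-edge constraints lifts uniquely to global edge cofactors, without hidden extra constraints at vertices where several lines cross. We must also show the decoupling introduces no spurious freedom. I would handle this by ordering the vertices on each line and performing explicit back-substitution. Uniqueness follows from the boundary (or truncated) endpoint conventions $q_0^{(A_1)}=q_r^{(A_r)}=0$. A further point is that $M(TE(\Delta))$ acts on polynomial-valued unknowns. Its rank must therefore be interpreted over the coefficient spaces, which is where the later homogeneous decomposition enters.
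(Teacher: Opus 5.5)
Your proposal is correct and takes essentially the same route as the paper's proof. Both split off $\mathbb{P}_d$ and parametrize each interior vertex by the merged cofactors in $W[A_i]$, a space of dimension $k_d^{\mu}(N_i)$. Both then propagate along each $l$-edge: rays add nothing, each cross-cut adds one free $\mathbb{P}_{d-\mu-1}$ parameter, and each truncated $l$-edge imposes $\mathscr{Q}_l\equiv 0$. The count is finished by rank--nullity, and your explicit back-substitution along lines handles the bijectivity step more carefully than the paper's sketch. One small slip: on a ray the recursion is anchored at the interior free endpoint, where the analogue of the convention $q_r^{(A_r)}:=0$ applies, and runs toward the boundary, so the boundary edge's cofactor is the last one determined rather than the first one forced.
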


\begin{proof}
Let $\mathbb{H}$ be the space of all edge cofactors of $\Delta$ that satisfy the global conformality conditions \eqref{eq:global-conformality}. Then $S_d^\mu(\Delta) = \mathbb{P}_d \oplus \mathbb{H}$, $\mathbb{P}_d$ has defined in Page 4, when introducing the spline space $S_d^{\mu}(\Delta)$. so
$$\dim S_d^\mu(\Delta) = \binom{d+2}{2} + \dim\mathbb{H}.$$
To compute $\dim\mathbb{H}$, consider the decoupled edge cofactors at each interior vertex $A_v$, $v=1,\dots,V$. By Corollary~\ref{cor:local dimension}, the local conformality conditions at $A_v$ yield $\dim W[A_v] = k_d^\mu(N_v)$ free parameters $p_i^{(A_v)}$. Lemma~\ref{lem global condition in one truncated line} shows that these parameters must satisfy additional linear constraints \eqref{eq:global condition in one truncated line} along each truncated $l$-edge in $TE(\Delta)$. The total number of independent constraints equals $\mathrm{rank} M(TE(\Delta))$.

The remaining freedom arises from the structure of lines in $TE(\Delta)$:
\begin{itemize}
    \item Truncated $l$-edges: All edge cofactors are uniquely determined by the $p_i^{(A_i)}$ at vertices on the line; no extra parameters are required.
    \item Rays: One endpoint is an interior vertex, so all cofactors are determined by the $p_i^{(A_i)}$ on the ray; no extra parameters are needed.
    \item Cross-cuts (lines with both endpoints on the boundary): The constraints provide only the sums of adjacent cofactors. Determining all cofactors on each cross-cut requires one additional free parameter per cross-cut. With $c$ cross-cuts, this contributes $c\binom{d-\mu+1}{2}$ degrees of freedom.
\end{itemize}
Thus,
$$\dim\mathbb{H} = c\binom{d-\mu+1}{2} + \sum_{v=1}^V k_d^\mu(N_v) - \mathrm{rank}( M(TE(\Delta))).$$
The statement follows directly from the above equation.
$\Box$
\end{proof}

The dimension formula \eqref{eq:dimension formula} for spline spaces over an arbitrary rectilinear partition $\Delta$ shows that the dimension depends only on the rank of the conformality matrix associated with the TE-connected component $\textsf{CVS}[TE(\Delta)]$.
For the highest smoothness $\mu=d-1$, this conformality matrix forms a numerical matrix. For general $\mu$, however, constructing the conformality matrix and computing its rank is nontrivial. The next subsection addresses this construction and rank evaluation.

\subsection{Solving the global conformality conditions}
This subsection describes the construction of the conformality matrix $M(TE(\Delta))$ for general smoothness order $\mu$. The matrix arises from the global conformality constraints along each truncated $l$-edge in $TE(\Delta)$, which depend on the solution of the local conformality conditions \eqref{eq:local-conformality} at each interior vertex.
We first recall two results from~\cite{schumaker1979dimension}.
\begin{lemma}[\cite{schumaker1979dimension}]\label{lem:homogeneous polynomial}
Define
$$\textsf{CVS}_j[A] := \bigl\{(q_1,\dots,q_n) : \sum_{i=1}^n q_i L_i^{\mu+1} \equiv 0,\ q_i \in \tilde{\mathbb{P}}_{d-\mu-j}\bigr\},$$
where $j=1,\dots,d-\mu$ and $\tilde{\mathbb{P}}_k$ is the space of homogeneous polynomials of degree $k$. Then
$$\textsf{CVS}[A] = \bigoplus_{j=1}^{d-\mu} \textsf{CVS}_j[A].$$
\end{lemma}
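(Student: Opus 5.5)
The plan is to exploit the fact that, after translating coordinates so that $A$ is the origin, every $L_i$ is a homogeneous linear form. The conformality condition is then compatible with the grading of $\mathbb{P}_{d-\mu-1}$ by homogeneous degree, and the decomposition should follow directly from that compatibility. First I would justify the translation. The map $(x,y)\mapsto(x-x_A,y-y_A)$ induces an automorphism of $\mathbb{P}_k$ for every $k$, and it preserves the identity $\sum_i q_i L_i^{\mu+1}\equiv 0$. So I may assume that $A=(0,0)$ and $L_i=\alpha_i x+\beta_i y$, exactly as in Lemma~\ref{lem:local dimension}. The homogeneous components $\tilde{\mathbb{P}}_{d-\mu-j}$ are understood in these local coordinates.

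Next, take any $(q_1,\dots,q_n)\in\textsf{CVS}[A]$ and write each cofactor as a sum of homogeneous parts,
$$q_i=\sum_{j=1}^{d-\mu} q_i^{[j]}, \qquad q_i^{[j]}\in\tilde{\mathbb{P}}_{d-\mu-j}.$$
This is possible because $\mathbb{P}_{d-\mu-1}=\bigoplus_{k=0}^{d-\mu-1}\tilde{\mathbb{P}}_k$, and the index $j=1,\dots,d-\mu$ runs over exactly the degrees $k=d-\mu-1,\dots,0$. Since $L_i^{\mu+1}$ is homogeneous of degree $\mu+1$, the product $q_i^{[j]}L_i^{\mu+1}$ is homogeneous of degree $d+1-j$. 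Grouping by degree gives
$$\sum_{i=1}^n q_iL_i^{\mu+1}=\sum_{j=1}^{d-\mu}\Bigl(\sum_{i=1}^n q_i^{[j]}L_i^{\mu+1}\Bigr),$$
and the $j$-th bracket is homogeneous of degree $d+1-j$. These degrees are pairwise distinct, and a polynomial is identically zero only if each of its homogeneous components vanishes. So every bracket vanishes, which means $(q_1^{[j]},\dots,q_n^{[j]})\in\textsf{CVS}_j[A]$. This proves $\textsf{CVS}[A]\subseteq\sum_j\textsf{CVS}_j[A]$. The reverse inclusion is immediate: each $\textsf{CVS}_j[A]$ lies in $\textsf{CVS}[A]$ since $\tilde{\mathbb{P}}_{d-\mu-j}\subset\mathbb{P}_{d-\mu-1}$, and $\textsf{CVS}[A]$ is a linear space.

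Finally, the sum is direct. Vectors in different $\textsf{CVS}_j[A]$ have entries in different homogeneous components $\tilde{\mathbb{P}}_{d-\mu-j}$, and the sum $\bigoplus_k\tilde{\mathbb{P}}_k$ is direct, applied coordinatewise in $\mathbb{P}_{d-\mu-1}^n$. So if a sum of elements from distinct $\textsf{CVS}_j[A]$ is zero, each summand is zero.

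There is no substantial obstacle here. The only point that needs care is the first step: the lemma is only meaningful once the vertex sits at the origin, so that each $L_i$ has no constant term. I would state this normalization explicitly and note that translation preserves both the cofactor spaces and the conformality identity. As a consistency check, $\dim\textsf{CVS}_j[A]=\bigl(n(d-\mu-j+1)-(d-j+2)\bigr)_+$, the $j$-th term in Lemma~\ref{lem:local dimension}. This dimension count is not needed for the proof.
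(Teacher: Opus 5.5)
Your proof is correct and is essentially the standard argument. The paper gives no proof of its own and simply cites Schumaker; for the analogous decomposition in Lemma~\ref{lem:homogeneous global conformality conditions} it offers only ``uniqueness of the homogeneous decomposition,'' which is exactly your degree-grading argument once the $L_i$ are homogeneous linear forms. The paper leaves that requirement implicit through the form $L_i=\alpha_i x+\beta_i y$ in Lemma~\ref{lem:local dimension}, so your explicit choice of coordinates centered at $A$ is a worthwhile clarification.
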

\begin{lemma}[\cite{schumaker1979dimension}]\label{lem:local linear equations and full row rank matrix}
For each $j$, $\textsf{CVS}_j[A]$ is the null space of $Q{\mathbf c}=0$, where $Q = (Q_1,\dots,Q_n) \in \mathbb{R}^{(d-j+2) \times n(d-\mu-j+1)}$ and each block $Q_i$ is the matrix
\begin{equation}\label{eq: Q_i}
    Q_i=\begin{pmatrix}
      1 & 0 & 0 & \cdots & 0\\
      \binom{d-j+1}{1}\gamma_i & 1 & 0 & \cdots & 0\\
      \binom{d-j+1}{2}\gamma_i^2 & \binom{d-j}{1}\gamma_i & 1 & \cdots & 0\\
      \vdots & \vdots & \vdots & \ddots & \vdots\\
      \binom{d-j+1}{d-j}\gamma_i^{d-j} & \binom{d-j}{d-j-1}\gamma_i^{d-j-1} & \cdots & \cdots & 1\\
      \binom{d-j+1}{d-j+1}\gamma_i^{d-j+1} & \binom{d-j}{d-j}\gamma_i^{d-j} & \cdots & \cdots & \binom{\mu+1}{\mu+1}\gamma_i^{\mu+1}\\
  \end{pmatrix}_{(d-j+2)\times(d-\mu-j+1)}
\end{equation}  
with $\gamma_i =\frac{\alpha_i}{ \beta_i}$, here $\alpha_i,\beta_i$ are defined as Lemma~\ref{lem:local dimension}. The matrix $Q$ has full rank, i.e. $\mathrm{rank}(Q)=\min\{n(d-\mu-j+1),d-j+2\}$.
\end{lemma}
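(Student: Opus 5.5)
The statement to prove is Lemma~\ref{lem:local linear equations and full row rank matrix}. The plan has three steps: translate the homogeneous identity $\sum_i q_i L_i^{\mu+1}\equiv 0$ into a coefficient system by dehomogenizing, read off the block structure of $Q_i$, and then prove full rank by reducing it to a statement about powers of linear forms.

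Fix $j$ and put $k=d-\mu-j$. Then each $q_i\in\tilde{\mathbb P}_k$, and $q_iL_i^{\mu+1}$ is homogeneous of degree $k+\mu+1=d-j+1$. Write $q_i(x,y)=\sum_{s=0}^{k} c_{i,s}\,x^{s}y^{k-s}$; this gives $k+1=d-\mu-j+1$ unknowns per edge. Where $\beta_i\neq 0$, factor $L_i=\beta_i(y+\gamma_i x)$ and absorb $\beta_i^{\mu+1}$ into the coefficients. (If some $\beta_i=0$, first apply a generic rotation of coordinates, which changes neither ranks nor null-space dimensions.) Expanding $(y+\gamma_i x)^{\mu+1}q_i$ and collecting the coefficient of $x^{t}y^{d-j+1-t}$ for $t=0,\dots,d-j+1$ gives $d-j+2$ equations. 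The coefficient of $c_{i,s}$ in equation $t$ is $\binom{\mu+1}{t-s}\gamma_i^{t-s}$, so each block is lower-triangular banded with ones on the diagonal. Rewriting the columns in the scaled basis $x^s(y+\gamma_i x)^{k-s}$, or equivalently through the convolution identity for binomials, should reproduce exactly the entries displayed in \eqref{eq: Q_i}. I expect this to be bookkeeping: choose the basis of $\tilde{\mathbb P}_k$ per edge so the entries match, noting that a change of basis within each block does not alter the rank. This establishes that $\textsf{CVS}_j[A]$ is the null space of $Q\mathbf c=0$.

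For full rank, identify the column space of $Q$ with the subspace
$$U=\sum_{i=1}^n L_i^{\mu+1}\tilde{\mathbb P}_{k}\subseteq \tilde{\mathbb P}_{d-j+1},$$
so that $\mathrm{rank}(Q)=\dim U$. By apolarity (Macaulay inverse systems), the annihilator of $U$ in the dual space is the set of degree-$(d-j+1)$ forms $F$ with $\partial_{L_i^\perp}^{\,\mu+1}F=0$ for all $i$, that is, forms that vanish to order $k+1$ at the $n$ distinct points $[\beta_i:-\alpha_i]\in\mathbb P^1$. A binary form of degree $D=d-j+1$ vanishing to order $k+1$ at $n$ distinct points is divisible by $\prod_i \ell_i^{k+1}$. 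Hence the annihilator has dimension $\max\{0,\,D+1-n(k+1)\}$, and
$$\dim U=\min\{n(k+1),\,D+1\}=\min\{n(d-\mu-j+1),\,d-j+2\}.$$
The distinctness hypothesis $\alpha_{i_1}\beta_{i_2}-\alpha_{i_2}\beta_{i_1}\neq 0$ enters exactly here, because it makes the points distinct. This gives the claimed rank.

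The main obstacle is the duality step. If I want to avoid apolarity, I would instead argue directly that the square or wide case of $Q$ is a generalized (confluent) Vandermonde matrix in the distinct nodes $\gamma_i$, each node with multiplicity $k+1$, whose determinant is a nonzero multiple of $\prod_{i<i'}(\gamma_i-\gamma_{i'})^{(k+1)^2}$. The tall case would then follow by taking a subset of blocks with total column count at most $d-j+2$ and completing it to full row rank.
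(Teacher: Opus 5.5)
Your proof is correct. The paper gives no argument for this lemma; it quotes it from Schumaker without proof. So the comparison is between your route and the natural matrix-level one.

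\textbf{First step.} Your hedged ``bookkeeping'' goes through with no extra scaling. Put $D=d-j+1$ and $k=d-\mu-j$. The $s$-th column of the displayed $Q_i$ is exactly the coefficient vector of
$x^s(y+\gamma_ix)^{D-s}=(y+\gamma_ix)^{\mu+1}\,x^s(y+\gamma_ix)^{k-s}$.
Hence $\mathbf c_i$ is the coordinate vector of $\beta_i^{\mu+1}q_i$ in the basis $\{x^s(y+\gamma_ix)^{k-s}\}_{s=0}^{k}$. This is the basis the paper uses in its worked examples, e.g.\ $x+y-3$ and $x-1$ at $P$. It also shows that the column space of $Q$ is your $U$.

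\textbf{Rank: the matrix route.} The identity $\binom{D-s}{t-s}=\binom{D}{t}\binom{t}{s}\big/\binom{D}{s}$ gives
$Q_i=\mathrm{diag}\bigl(\binom{D}{t}\bigr)_t\,V_i\,\mathrm{diag}\bigl(\binom{D}{s}^{-1}\bigr)_s$, where $V_i=\bigl(\binom{t}{s}\gamma_i^{t-s}\bigr)_{t,s}$ is a confluent Vandermonde block. Full rank is then unisolvence of Hermite interpolation at the distinct nodes $\gamma_i$, each with multiplicity $k+1$. To make your fallback airtight, the two regimes need different handling:
\begin{itemize}
\item If $n(k+1)>D+1$, select $D+1$ columns: whole blocks plus an \emph{initial} segment of one further block.
\item If $n(k+1)<D+1$, adjoin blocks at auxiliary distinct nodes to get a square nonsingular matrix containing the columns of $V$.
\end{itemize}
Your last two sentences blur these; in particular, the wide case has no determinant until you select columns.

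\textbf{What each route buys.} Your main inverse-system route replaces both cases with a single dimension count in $\tilde{\mathbb P}_D$ and is coordinate-free. It also extends verbatim to different exponents $\mu_i+1$ on different lines, giving rank $\min\{D+1,\sum_i(D-\mu_i)_+\}$; this is the fat-points-on-$\mathbb P^1$ viewpoint behind the homological approach cited in the introduction. The matrix route is more elementary and stays with the explicit $Q_i$ that the paper's algorithm actually assembles. It also makes the dependence on the $\gamma_i$ explicit through the Vandermonde determinant.

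\textbf{Minor slip.} With the pairing $x\mapsto\partial_X$, $y\mapsto\partial_Y$ and $L_i(\partial)=\alpha_i\partial_X+\beta_i\partial_Y$, one has $L_i(\partial)^{\mu+1}F=0$ if and only if $(\beta_iX-\alpha_iY)^{k+1}\mid F$. So the relevant point is $[\alpha_i:\beta_i]$, not $[\beta_i:-\alpha_i]$. Since only distinctness of the points is used, nothing changes.
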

The corresponding result for the linear space $W[A]$ is given below.
\begin{corollary}\label{cor:local linear equation}
Suppose at vertex $A$ there are $n$ incident edges, of which $2m \le n$ form $m$ pairs of opposite collinear edges with the same line equation $L_{2i-1} = L_{2i}$ ($i=1,\dots,m$), while the remaining $n-2m$ edges have distinct directions. For $j=1,\dots,d-\mu$, define
$$W_j[A] := \bigl\{(q_1+q_2,\dots,q_{2m-1}+q_{2m},q_{2m+1},\dots,q_n) : \sum_{i=1}^n q_i L_i^{\mu+1} \equiv 0,\ q_i \in \tilde{\mathbb{P}}_{d-\mu-1}\bigr\}.$$
Then
\begin{itemize}
\item $W[A] = \bigoplus_{j=1}^{d-\mu} W_j[A]$.
\item Each $W_j[A]$ is the null space of the linear system $Q'{\mathbf b}=0$, where $Q' = (Q_1,\dots,Q_{n-m}) \in \mathbb{R}^{(d-j+2) \times (n-m)(d-\mu-j+1)}$ is built from Lemma~\ref{lem:local linear equations and full row rank matrix} by merging the $m$ pairs of opposite columns. The matrix $Q'$ has full row rank.
\end{itemize}
\end{corollary}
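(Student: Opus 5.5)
The proof follows the route of Corollary~\ref{cor:local dimension}, now applied degree by degree. Write the local condition $\sum_{i=1}^n q_i L_i^{\mu+1}\equiv 0$ with $L_{2i-1}=L_{2i}$ for $i\le m$. Since each $L_i$ is a homogeneous linear form through $A$ (placed at the origin), we combine the paired terms. This rewrites the condition as $\sum_{i=1}^{n-m} p_i L_i^{\mu+1}\equiv 0$, where $p_i=q_{2i-1}+q_{2i}$ for $i\le m$ and $p_{m+i}=q_{2m+i}$. The lines $L_1,\dots,L_{n-m}$ are now pairwise distinct. The map $(q_1,\dots,q_n)\mapsto(p_1,\dots,p_{n-m})$ is linear and surjective onto the set of tuples satisfying the reduced equation: given such $p$, take $q_{2i-1}=p_i$ and $q_{2i}=0$. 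Hence $W[A]$ coincides with $\textsf{CVS}[A']$, the conformality space of a virtual vertex $A'$ with the $n-m$ distinct lines $L_1,\dots,L_{n-m}$. (I read the degree in the definition of $W_j[A]$ as $d-\mu-j$, consistent with Lemma~\ref{lem:homogeneous polynomial}.)

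For the direct sum, I apply Lemma~\ref{lem:homogeneous polynomial} to $A'$. Each $p_i\in\mathbb{P}_{d-\mu-1}$ splits into homogeneous parts of degrees $d-\mu-j$, $j=1,\dots,d-\mu$. Because $L_i^{\mu+1}$ is homogeneous of degree $\mu+1$, the identity $\sum_i p_iL_i^{\mu+1}\equiv0$ decomposes into separate identities in each total degree $d-j+1$. Thus $\textsf{CVS}[A']=\bigoplus_j\textsf{CVS}_j[A']$. Next I check that $W_j[A]$ equals $\textsf{CVS}_j[A']$. The reduction map commutes with taking homogeneous components, and the preimage $q_{2i-1}=p_i$, $q_{2i}=0$ stays homogeneous, so the two spaces agree. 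This yields $W[A]=\bigoplus_j W_j[A]$.

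For the matrix description, I invoke Lemma~\ref{lem:local linear equations and full row rank matrix} for $A'$. With $n-m$ distinct slopes $\gamma_i$, $\textsf{CVS}_j[A']$ is the null space of $(Q_1,\dots,Q_{n-m})$. Merging columns is the right way to see this: in the original system $Q\mathbf c=0$ for $A$, the blocks $Q_{2i-1}$ and $Q_{2i}$ are identical because their $\gamma$ coincide, so $Q_{2i-1}\mathbf c_{2i-1}+Q_{2i}\mathbf c_{2i}=Q_{2i}(\mathbf c_{2i-1}+\mathbf c_{2i})$. This is exactly the merged block acting on the coefficients of $p_i$.

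The one point needing care, and the main obstacle, is the rank claim. Lemma~\ref{lem:local linear equations and full row rank matrix} gives $\mathrm{rank}(Q')=\min\{(n-m)(d-\mu-j+1),\,d-j+2\}$, which is full rank rather than automatically full row rank. Full row rank holds precisely when $(n-m)(d-\mu-j+1)\ge d-j+2$. In the complementary case $Q'$ has full column rank, so $W_j[A]=0$; this matches the $(\cdot)_+$ in Corollary~\ref{cor:local dimension}. I will therefore state the rank as full (row rank in the nontrivial regime) and note that it is consistent with the dimension count there. Finally, if some $\beta_i=0$ (a vertical line), I would rotate coordinates beforehand so that all $\gamma_i$ are finite; this leaves all dimensions unchanged.
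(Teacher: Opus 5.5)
Your proposal is correct and follows the same route as the paper, whose proof is a two-line version of yours. You merge the identical blocks $Q_{2i-1}=Q_{2i}$ of each collinear pair to reduce to $n-m$ distinct directions, then invoke Lemma~\ref{lem:homogeneous polynomial} for the direct sum and Lemma~\ref{lem:local linear equations and full row rank matrix} for the matrix; your explicit lifting $q_{2i-1}=p_i$, $q_{2i}=0$ and your reading $\tilde{\mathbb{P}}_{d-\mu-j}$ fill in details the paper leaves implicit.

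Your rank caveat is also justified. That lemma only gives $\mathrm{rank}\,Q'=\min\{(n-m)(d-\mu-j+1),\,d-j+2\}$, so ``full row rank'' holds only when $(n-m)(d-\mu-j+1)\ge d-j+2$. For example, for $j=3$ in the Yuan--Stillman example $Q'$ is $4\times 3$ and $W_3=0$, and the paper's proof passes over this.
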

\begin{proof}
The decomposition follows directly from Lemma~\ref{lem:homogeneous polynomial}. The structure of $Q'$ and its full row rank follow from Lemma~\ref{lem:local linear equations and full row rank matrix} after column merging corresponding to collinear opposite edges, leaving $n-m$ distinct directions.
\end{proof}
\begin{lemma}\label{lem:homogeneous global conformality conditions}
Let $l$ be a truncated $l$-edge with vertices $A_1,\dots,A_r$ ordered along $l$. Write each $p_i^{(A_i)} = \sum_{j=1}^{d-\mu} p_{ij}^{(A_i)}$ with $p_{ij}^{(A_i)} \in \tilde{\mathbb{P}}_{d-\mu-j}$. Then
\begin{equation}\label{eq:homogeneous global conformality conditions}
    \sum_{i=1}^r p_i^{(A_i)} \equiv 0 \quad \Longleftrightarrow \quad \sum_{i=1}^r p_{ij}^{(A_i)} \equiv 0 \quad \text{for all } j=1,\dots,d-\mu.
\end{equation}
Thus the global conformality condition \eqref{eq:global condition in one truncated line} decouples into $d-\mu$ independent homogeneous conditions $\sum_{i=1}^r p_{ij}^{(A_i)} \equiv 0$.
\end{lemma}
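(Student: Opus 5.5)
The proof rests on one elementary fact: the space of bivariate polynomials of degree at most $d-\mu-1$ is the direct sum $\bigoplus_{j=1}^{d-\mu}\tilde{\mathbb{P}}_{d-\mu-j}$ of its homogeneous components. Every polynomial therefore decomposes uniquely into homogeneous parts, and it vanishes identically if and only if each homogeneous part vanishes. I would first note why this applies here. Each decoupled cofactor $p_i^{(A_i)}$ is a sum $q_{i-1}^{(A_i)}+q_i^{(A_i)}$ of edge cofactors in $\mathbb{P}_{d-\mu-1}$, so it lies in $\mathbb{P}_{d-\mu-1}$. Hence the decomposition $p_i^{(A_i)}=\sum_{j}p_{ij}^{(A_i)}$ with $p_{ij}^{(A_i)}\in\tilde{\mathbb{P}}_{d-\mu-j}$ exists and is unique, consistent with Lemma~\ref{lem:homogeneous polynomial} and Corollary~\ref{cor:local linear equation}, which already split $W[A_i]$ by homogeneous degree.

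For the direction ($\Leftarrow$), I would simply sum the homogeneous identities over $j$ and interchange the finite sums:
$$\sum_{i=1}^r p_i^{(A_i)}=\sum_{j=1}^{d-\mu}\sum_{i=1}^r p_{ij}^{(A_i)}\equiv 0.$$

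For the direction ($\Rightarrow$), I would group the total sum by degree as
$$\sum_i p_i^{(A_i)}=\sum_{j=1}^{d-\mu}\Bigl(\sum_i p_{ij}^{(A_i)}\Bigr).$$
The $j$-th bracket is homogeneous of degree $d-\mu-j$, and these degrees are pairwise distinct for distinct $j$. Uniqueness of the homogeneous decomposition (equivalently, the monomials of different total degree are linearly independent) then forces every bracket to be the zero polynomial. This yields the $d-\mu$ separate conditions.

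One subtlety is that the cofactors must be expressed in a common coordinate system, not in local coordinates centred at each vertex $A_i$. Lemma~\ref{lem:local dimension} writes each line as $\alpha x+\beta y$, through the origin, so it implicitly uses local coordinates. If each $p_i$ were homogeneous only about its own vertex, a translation would mix degrees, and the claimed decoupling would fail as a literal statement. I would therefore make explicit that the homogeneous decomposition in the lemma is taken in one fixed global coordinate system along $l$, in which the identity \eqref{eq:global condition in one truncated line} is a polynomial identity. In that setting the argument above is immediate. I would flag this coordinate issue as the only real obstacle, and otherwise treat the lemma as a direct consequence of the graded structure of $\mathbb{P}_{d-\mu-1}$.
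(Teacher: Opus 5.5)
Your proof is correct and follows the paper's argument, which simply invokes uniqueness of the homogeneous decomposition of a polynomial in $\mathbb{P}_{d-\mu-1}$. Your caveat that this decomposition must be taken in one fixed global coordinate system, and not about each vertex $A_i$ as in Lemma~\ref{lem:local dimension} and Lemma~\ref{lem:homogeneous polynomial}, is a genuine point the paper leaves implicit; the Yuan--Stillman computation does work this way, since $\text{homo}_0(q_{PO}^{(P)})=s_{2,3}^{(P)}-48s_1^{(P)}$ combines components of different local degree.
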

\begin{proof}
Immediate from the uniqueness of homogeneous decomposition.
\end{proof}
\begin{lemma}\label{lem:equation of homogeneous conditions}
Let $l$ be a truncated $l$-edge with vertices $A_1,\dots,A_r$. For each interior vertex $A_i$ on $l$ and each $j$, let $\{m_l^{(i)}\}_{l=1}^{k(N_i)}$ be a basis of $W_j[A_i]$, where $k(N_i) = (N_i(d-\mu-j) - (d-j+2))_+$. Then
$$p_{ij}^{(A_i)} = \sum_{s=0}^{d-\mu-j} \Bigl( \sum_{l=1}^{k(N_i)} h_{sl}^{(i)} m_l^{(i)} \Bigr) x^s y^{d-\mu-j-s}.$$
The homogeneous global conformality condition for degree $j$ along $l$ becomes the $(d-\mu-j+1)$ scalar equations
$$\sum_{i=1}^r \sum_{l=1}^{k(N_i)} h_{sl}^{(i)} m_l^{(i)} = 0, \quad s=0,\dots,d-\mu-j.$$
\end{lemma}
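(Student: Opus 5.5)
The plan is to treat this as bookkeeping. It combines the decomposition of Corollary~\ref{cor:local linear equation} with Lemma~\ref{lem:homogeneous global conformality conditions}. Fix a degree index $j$ and a vertex $A_i$ on $l$.

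First, I parametrize $W_j[A_i]$. By Corollary~\ref{cor:local linear equation}, $W_j[A_i]$ is the null space of $Q'\mathbf b=0$, and $Q'$ has full row rank. Counting columns minus rows gives a null space of dimension $(N_i(d-\mu-j+1)-(d-j+2))_+$. This matches the summand of $k_d^\mu(N_i)$ in Theorem~\ref{thm:main}; the shift in the index relative to the stated $k(N_i)$ is only a matter of convention for $j$. I choose a basis $\{m_l^{(i)}\}$ of this null space.

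Second, I read off the coefficients. Each vector in $W_j[A_i]$ lists the coefficients, in the monomial basis $x^sy^{d-\mu-j-s}$, of the merged cofactors attached to each direction at $A_i$. The merged cofactor in the direction of $l$ is exactly $p_{ij}^{(A_i)}$. So I extract from each basis vector the block of $d-\mu-j+1$ entries belonging to the direction of $l$, and call these scalars $m_l^{(i)}$ indexed by $s$. A general element of $W_j[A_i]$ is a linear combination of the basis vectors with free coefficients; denoting these free coefficients $h^{(i)}_{sl}$, this gives the displayed expansion of $p_{ij}^{(A_i)}$. For the endpoints $A_1$ and $A_r$ the same description applies, since $l$ contributes a single, unmerged edge there.

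Third, I impose the global condition. Substituting the expansion into $\sum_i p_{ij}^{(A_i)}\equiv 0$ from Lemma~\ref{lem:homogeneous global conformality conditions} and comparing the coefficients of the $d-\mu-j+1$ monomials $x^sy^{d-\mu-j-s}$ gives exactly the stated scalar equations. This works because a homogeneous polynomial vanishes identically if and only if all its monomial coefficients vanish.

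The main obstacle is bookkeeping rather than mathematics. One must make sure the parameters $h$ at a vertex shared by several truncated $l$-edges are the same free parameters in every equation, so that these equations assemble into the single matrix $M(TE(\Delta))$. One must also interpret the indexing of $h_{sl}$ versus $m_l$ consistently. I would fix the convention that $h^{(i)}_l$ are the vertex parameters and the $m_l^{(i)}$ carry the $s$-dependence, and note that the displayed formula is equivalent under that reading.
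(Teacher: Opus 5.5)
Your proposal is the paper's proof with the bookkeeping made explicit: parametrize $p_{ij}^{(A_i)}$ through a basis of $W_j[A_i]$, substitute into the decoupled condition of Lemma~\ref{lem:homogeneous global conformality conditions}, and compare coefficients of $x^sy^{d-\mu-j-s}$. Your reading of the indexing ($h^{(i)}_l$ free, $m^{(i)}_l$ carrying the $s$-dependence) and your count $(N_i(d-\mu-j+1)-(d-j+2))_+$ are both right, though the stated $k(N_i)$ is a typo that no shift of $j$ reproduces. One caveat you inherit from the paper: entries of $W_j[A_i]$ are coefficients in the basis $x^s(y+\gamma_i x)^{d-\mu-j-s}$ centred at $A_i$, so comparing coefficients across distinct vertices needs a common coordinate system, which makes the degree-$j$ equations also involve the $W_{j'}[A_i]$ parameters with $j'<j$ (block-triangular, as in the paper's own Yuan--Stillman computation).
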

\begin{proof}
    By Lemma~\ref{lem:homogeneous global conformality conditions}, the independent homogeneous conditions $$\sum_{i=1}^r p_{ij}^{(A_i)} \equiv 0$$ can be represented as 
    $$\sum_{i=1}^r\left(\sum_{s=0}^{d-\mu-j} \Bigl( \sum_{l=1}^{k(N_i)} h_{sl}^{(i)} m_l^{(i)} \Bigr) x^s y^{d-\mu-j-s}\right)\equiv 0.$$
    It follows that 
    $$\sum_{i=1}^r \sum_{l=1}^{k(N_i)} h_{sl}^{(i)} m_l^{(i)} = 0, \quad s=0,\dots,d-\mu-j.$$ This establishes the desired result and thus proves the lemma.
\end{proof}

\begin{theorem}
The system $M(TE(\Delta)) \mathbf{h} = 0$ represents the global conformality conditions, where the matrix $M(TE(\Delta))$ is assembled from the linear equations derived by applying Lemma \ref{lem:equation of homogeneous conditions} to every truncated $l$-edge in $TE(\Delta)$. Specifically, for each homogeneous degree $j = 1, \dots, d-\mu$, the matrix encodes the scalar constraints:$$\sum_{i=1}^r \sum_{l=1}^{k(N_i)} h_{sl}^{(i)} m_l^{(i)} = 0, \quad s=0, \dots, d-\mu-j,$$where $\mathbf{h}$ is the vector consisting of the coefficients $h_{sl}^{(i)}$.
\end{theorem}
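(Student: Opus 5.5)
The plan is to show that the linear system assembled from Lemma~\ref{lem:equation of homogeneous conditions} has the same solution set as the global conformality conditions \eqref{eq:global condition in one truncated line} imposed on the vertex quantities $p_i^{(A_i)}$ for all truncated $l$-edges. By Theorem~\ref{thm global conformality conditions for interior vertices} and Lemma~\ref{lem global condition in one truncated line}, those conditions are exactly the global conformality conditions of $S_d^\mu(\Delta)$. The argument is a chain of equivalences, one layer at a time.

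\emph{Vertex parametrization.} First I would fix the unknowns. At each interior vertex $A_i$, Corollary~\ref{cor:local dimension} shows that the admissible merged cofactors form the space $W[A_i]$. Corollary~\ref{cor:local linear equation} then splits $W[A_i]=\bigoplus_j W_j[A_i]$, with each $W_j[A_i]$ the null space of a full-row-rank matrix $Q'$. Choosing bases $\{m_l^{(i)}\}$ of the $W_j[A_i]$ therefore parametrizes every locally admissible $p_i^{(A_i)}$ bijectively by coefficient vectors $\mathbf h$. In this step I also check that the local condition at $A_i$ is fully encoded by membership in $W[A_i]$. This holds because the non-collinear cofactors and the collinear sums $q_{2i-1}+q_{2i}$ are exactly the entries of $W[A_i]$, and the individual summands are recoverable along each $l$-edge, as in the proof of Theorem~\ref{thm:main}.

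\emph{Edge conditions.} Next, for each truncated $l$-edge $l$, I apply Lemma~\ref{lem:homogeneous global conformality conditions}. It splits $\sum_i p_i^{(A_i)}\equiv0$ into the homogeneous identities $\sum_i p_{ij}^{(A_i)}\equiv 0$, one for each $j=1,\dots,d-\mu$. Lemma~\ref{lem:equation of homogeneous conditions} then turns each identity into the $d-\mu-j+1$ scalar equations obtained by comparing coefficients of the monomials $x^s y^{d-\mu-j-s}$; this is legitimate because these monomials are linearly independent. All of these equations are linear in $\mathbf h$. Stacking them over all $l_1,\dots,l_t$ and all $j,s$ produces a matrix $M(TE(\Delta))$ with $M(TE(\Delta))\mathbf h=0$ if and only if every $\mathscr Q_{l_i}(\boldsymbol p)\equiv0$.

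\emph{Main obstacle.} The delicate step is the bookkeeping that identifies, along a given $l$-edge, the component of $W_j[A_i]$ that enters $p_{ij}^{(A_i)}$. A vertex may lie on several truncated $l$-edges, and then different coordinates of the same basis vectors $m_l^{(i)}$ enter different edge equations. I would handle this by indexing $p_{ij}^{(A_i)}$ by the pair (vertex, line) and taking, for each such pair, the merged coordinate of $W_j[A_i]$ that corresponds to that line. The $\mathbf h$ variables are then shared across all lines through $A_i$, and the assembled matrix is well defined. Linearity and the preceding equivalences then give the theorem; rank-nullity links this back to the rank term in Theorem~\ref{thm:main}.
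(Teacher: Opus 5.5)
Your outline follows the paper's own route: the theorem is just Lemma~\ref{lem:homogeneous global conformality conditions} followed by Lemma~\ref{lem:equation of homogeneous conditions}, applied to every truncated $l$-edge and stacked. Your bookkeeping by (vertex, line) pairs is a real improvement on what the paper writes.

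\emph{The gap.} The step where you split $\sum_i p_i^{(A_i)}\equiv 0$ into the level-wise identities $\sum_i p_{ij}^{(A_i)}\equiv 0$ fails, and it fails in the paper's lemma too. The splitting $W[A_i]=\bigoplus_j W_j[A_i]$ comes from Lemma~\ref{lem:local dimension}, where $A_i$ sits at the origin. So the component $p_{ij}^{(A_i)}$ you parametrize by a basis of $W_j[A_i]$ is homogeneous in $(x-x_{A_i},\,y-y_{A_i})$, and the vertices on $l$ are distinct points. Uniqueness of homogeneous decomposition needs a common centre. Re-expanded about a common origin, a level-$j$ component at $A_i$ also contributes to every lower degree.

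\emph{A counterexample.} Let $d=3$, $\mu=1$, and take the rectangle $[-1,2]\times[-1,1]$. The interior edges are $A_1A_2$ with $A_1=(0,0)$, $A_2=(1,0)$, plus edges from $A_1$ to $(-1,\pm1)$ and from $A_2$ to $(2,\pm1)$. Then $TE(\Delta)=\{A_1A_2\}$, $N_1=N_2=3$ and $c=0$.
\begin{itemize}
\item At each $A_i$ one has $W_2[A_i]=0$, since three squares of distinct lines through $A_i$ are independent.
\item The $A_1A_2$-coordinate of $W_1[A_i]$ ranges over all linear forms vanishing at $A_i$. Hence $p_1=\alpha_1x+\beta_1y$ and $p_2=\alpha_2(x-1)+\beta_2y$.
\item Your level-wise equations are $\alpha_1+\alpha_2=\beta_1+\beta_2=0$ for $j=1$; the $j=2$ equation has no unknowns. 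This has rank $2$, so Theorem~\ref{thm:main} would give $10+4-2=12$.
\item In fact $p_1+p_2\equiv 0$ also forces the constant $-\alpha_2$ to vanish. Equivalently, the edge cofactor must lie in $\mathrm{span}\{x,y\}\cap\mathrm{span}\{x-1,y\}=\mathrm{span}\{y\}$. The true rank is $3$ and $\dim S_3^1(\Delta)=11$.
\end{itemize}

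\emph{The repair.} Fix one global coordinate frame and write each basis vector of $W_j[A_i]$ in global monomials. Then require all $\binom{d-\mu+1}{2}$ global coefficients of $\sum_i p_i^{(A_i)}$ to vanish on each truncated $l$-edge. This is still linear in $\mathbf h$, so $M\mathbf h=0$ becomes exactly equivalent to the global conditions, with no appeal to Lemma~\ref{lem:homogeneous global conformality conditions}.

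The price is that the equations of global degree $d-\mu-j$ involve parameters from every level $j'\le j$. The correct matrix is therefore block lower triangular in $j$, and the per-$j$ equations of the statement are only its diagonal blocks. This is what the paper actually computes for the Yuan--Stillman partition, whose rows couple $s_1$ with $s_{2,k}$.

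Consequently the per-$j$ system by itself only gives a lower bound on the rank, hence an upper bound on the dimension. Equality is automatic only when $\mu=d-1$, where there is a single level.
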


The construction of $M(TE(\Delta))$ is summarized in Algorithm~\ref{alg:matrix}.

\begin{algorithm}[h]
\caption{\label{alg:matrix}Construction of the conformality matrix $M(TE(\Delta))$}
\begin{enumerate}
\item Identify the set $TE(\Delta)$ of all truncated $l$-edges and extractable from $\Delta$.
\item For each interior vertex $A_i$ and each $j=1,\dots,d-\mu$, compute a basis $\{m_l^{(i)}\}_{l=1}^{k(N_i)}$ of $W_j[A_i]$.
\item For each truncated $l$-edge in $TE(\Delta)$ and each $j$, generate the $(d-\mu-j+1)$ linear equations
$$\sum_{i=1}^r \sum_{l=1}^{k(N_i)} h_{sl}^{(i)} m_l^{(i)} = 0, \quad s=0,\dots,d-\mu-j.$$
\item The matrix $M(TE(\Delta))$ is assembled from the linear constraints defined by Step 3.
\end{enumerate}
\end{algorithm}

\section{Partitions with Disjoint Truncated $l$-edges}

In this section, we investigate a specific class of partitions where the $TE$-connected components consist of disjoint truncated $l$-edges. We demonstrate that under certain conditions, the dimension of the resulting spline space can be derived through a simplified application of our dimension calculation framework.

\begin{definition}
    A rectilinear partition $\Delta$ is called a \textbf{partition with disjoint truncated $l$-edges} if its $TE$-connected component, denoted by $TE(\Delta)$, is formed by several pairwise disjoint truncated $l$-edges.
\end{definition}

For the spline space $S_d^{\mu}(\Delta)$ over such partitions, we establish the following theorem.

\begin{theorem}\label{thm:dimension of disjoint L-edges partition}
    Let $\Delta$ be a partition with disjoint truncated $l$-edges, where $t$ denotes the number of such $l$-edges. Suppose that for each interior vertex $A_i$ ($i=1, \dots, V$), the number of distinct edge directions $N_i$ satisfies $N_i \ge \mu + 3$. Then, the dimension of the spline space $S_d^{\mu}(\Delta)$ is given by
    \begin{equation}\label{eq:dimension formula special partition}
    \dim S_d^{\mu}(\Delta) = \binom{d+2}{2} + (c-t)\binom{d-\mu+1}{2} + \sum_{i=1}^V k_d^{\mu}(N_i),
    \end{equation}
    where $c$, $V$, and $k_d^{\mu}(N_i)$ are defined as in Theorem~\ref{thm:main}.
\end{theorem}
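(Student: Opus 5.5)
Comparing \eqref{eq:dimension formula special partition} with \eqref{eq:dimension formula} of Theorem~\ref{thm:main}, the statement is equivalent to
$$\mathrm{rank}\, M(TE(\Delta)) = t\binom{d-\mu+1}{2}.$$
So I will compute this rank in two steps: an upper bound from the number of equations, and a matching lower bound from full row rank.

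\emph{Block structure.} Since the truncated $l$-edges $l_1,\dots,l_t$ are pairwise disjoint, no vertex of $TE(\Delta)$ lies on two of them. Hence the unknowns $p^{(A_i)}$ (equivalently the coefficients $\mathbf h$ of Lemma~\ref{lem:equation of homogeneous conditions}) appearing in $\mathscr{Q}_{l_k}$ are disjoint from those appearing in $\mathscr{Q}_{l_{k'}}$ for $k\neq k'$. After ordering unknowns by $l$-edge, $M(TE(\Delta))$ is block diagonal with one block $M(l_k)$ per $l$-edge. Thus
$$\mathrm{rank}\, M(TE(\Delta))=\sum_{k=1}^{t}\mathrm{rank}\, M(l_k).$$

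\emph{Counting equations.} By Lemmas~\ref{lem:homogeneous global conformality conditions} and \ref{lem:equation of homogeneous conditions}, each block $M(l_k)$ has exactly
$$\sum_{j=1}^{d-\mu}(d-\mu-j+1)=\binom{d-\mu+1}{2}$$
scalar rows, one for each coefficient of each homogeneous component $\sum_i p_{ij}^{(A_i)}$. This gives the upper bound $\mathrm{rank}\, M(l_k)\le\binom{d-\mu+1}{2}$. It remains to show each block has full row rank.

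\emph{Full row rank (main step).} It suffices to find a single vertex $A$ on $l_k$ whose variables already surject onto all the equations. Concretely, I need the linear map $W_j[A]\to\tilde{\mathbb P}_{d-\mu-j}$, sending an element to its component $p^{(A)}$ (the merged cofactor along the direction of $l_k$), to be onto for every $j$. Then the rows of $M(l_k)$ restricted to the columns of $A$ already have full row rank.

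To establish surjectivity, fix an arbitrary target $p\in\tilde{\mathbb P}_{d-\mu-j}$ and move $p\,L^{\mu+1}$ to the right-hand side. I then need to solve
$$\sum_{i\ne l} q_i L_i^{\mu+1} = -p\,L^{\mu+1}$$
using the remaining $N_A-1$ distinct directions, with $q_i\in\tilde{\mathbb P}_{d-\mu-j}$. By Lemma~\ref{lem:local linear equations and full row rank matrix}, the matrix built from those $N_A-1$ blocks has rank $\min\{(N_A-1)(d-\mu-j+1),\,d-j+2\}$. This equals the full row count $d-j+2$ provided
$$(N_A-1)(d-\mu-j+1)\ge d-j+2.$$
Write $m=d-\mu-j+1\ge1$, so that $d-j+2=m+\mu+1$. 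The condition becomes $(N_A-2)m\ge \mu+1$. This holds for all $m\ge1$ exactly when $N_A\ge\mu+3$, which is the hypothesis of the theorem.

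In the collinear case the merged column from Corollary~\ref{cor:local linear equation} plays the role of $L$, and the argument is unchanged. The only subtlety is that both endpoints of $l_k$ are T-type vertices, so the direction of $l_k$ is not paired there. Either choice of vertex works, because the hypothesis $N_A\ge\mu+3$ holds at every interior vertex.

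Hence each block $M(l_k)$ has rank $\binom{d-\mu+1}{2}$, so $\mathrm{rank}\, M(TE(\Delta))=t\binom{d-\mu+1}{2}$. Substituting this into \eqref{eq:dimension formula} yields \eqref{eq:dimension formula special partition}. I expect the main obstacle to be the surjectivity step: carefully matching the variable $p^{(A)}$ of Lemma~\ref{lem global condition in one truncated line} to the cofactor of the $l$-edge direction in $W_j[A]$.
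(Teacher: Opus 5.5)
Your proposal is correct and follows essentially the same route as the paper: you reduce via Theorem~\ref{thm:main} to showing $\mathrm{rank}\,M(TE(\Delta))=t\binom{d-\mu+1}{2}$, split $M(TE(\Delta))$ into blocks using disjointness, and show each block has full row rank using the inequality $(N_A-2)(d-\mu-j+1)\ge\mu+1$, which is equivalent to $N_A\ge\mu+3$. Your justification of the key step is tighter than the paper's. The paper only notes that $\dim\textsf{CVS}_j[A_i]$ exceeds $d-\mu-j+1$ and then declares all the $p_i^{(A_i)}$ free, which does not by itself follow. You instead prove the projection onto the $l$-edge cofactor is surjective, using the full rank of the remaining $N_A-1$ blocks, and this also shows that one such vertex per truncated $l$-edge would suffice.
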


\begin{proof}
    Let the $TE$-connected component $TE(\Delta)$ be composed of $t$ disjoint truncated $l$-edges, denoted by $l_1, l_2, \dots, l_t$. Due to the disjoint nature of these components, the conformality matrix $M(TE(\Delta))$ exhibits a block-diagonal structure. Consequently, the rank of the global conformality matrix is the sum of the ranks of the individual blocks:
    \[ \text{rank}(M(TE(\Delta))) = \sum_{i=1}^t \text{rank}(M(TE(l_i))), \]
    where $M(TE(l_i))$ is the conformality matrix associated with the $l$-edge $l_i$ under the global conformality conditions.

    Without loss of generality, consider a single truncated $l$-edge $l_i$ with vertices $A_1, \dots, A_r$. The assumption $N_i \ge \mu+3$ implies that for all $j \in \{1, \dots, d-\mu\}$, the following inequality holds:
    \[ N_i \ge 2 + \frac{\mu+1}{d-\mu-j+1}, \]
    which is algebraically equivalent to
    \[ N_i(d-\mu-j+1) - (d-j+2) \ge d-\mu-j+1. \]
    According to Lemma~\ref{lem:homogeneous polynomial}, this condition ensures that the dimension of the solution space for the cofactor space $\textsf{CVS}_j[A_i]$ at each vertex exceeds $d-\mu-j+1$. Since the number of degrees of freedom for each homogeneous component of the edge cofactor in $\textsf{CVS}_j[A_i]$ is exactly $d-\mu-j+1$, we can designate these homogeneous components as free variables. Under this setting, the global conformality condition for the $l$-edge $l_i$ can be expressed as the following polynomial identity:
    \begin{equation}\label{eq:sum_zero}
    \sum_{i=1}^r p_i^{(A_i)} \equiv 0,
    \end{equation}
    where $p_i^{(A_i)} \in \mathbb{P}_{d-\mu-1}$ for $i=1, \dots, r$. 

    The dimension of the space of $r$-tuples $(p_1, \dots, p_r)$ satisfying \eqref{eq:sum_zero} is $(r-1)\binom{d-\mu+1}{2}$. Since the total number of coefficients in the $r$ polynomials is $r\binom{d-\mu+1}{2}$, the number of independent linear constraints imposed by \eqref{eq:sum_zero} is:
    \[ \text{rank}(M(TE(l_i))) = r\binom{d-\mu+1}{2} - (r-1)\binom{d-\mu+1}{2} = \binom{d-\mu+1}{2}. \]
    Summing over all $t$ disjoint $l$-edges, we obtain $\text{rank}(M(TE(\Delta))) = t\binom{d-\mu+1}{2}$. Substituting this into the general dimension formula from Theorem~\ref{thm:main} completes the proof.
\end{proof}

\begin{example}\label{exm:separated l edges partition}
    Consider the spline space $S_d^1(\Delta)$ over the partition $\Delta$ illustrated in Figure~\ref{fig:example_partition} ($d\ge 2$). The partition contains two disjoint truncated $l$-edges, $AB$ and $DE$, thus qualifying as a partition with disjoint truncated $l$-edges. For all interior vertices $A, B, C, D, E$, we have $N_i = 4$, satisfying the condition $N_i \ge \mu + 3 = 4$.
    
    Applying Theorem~\ref{thm:dimension of disjoint L-edges partition} with $c=0$, $t=2$, $\mu=1$, and $V=5$, we have:
    \[ \dim S_d^1(\Delta) = \binom{d+2}{2} - 2\binom{d}{2} + 5 k_d^1(4). \]
    Substituting the specific values, the dimension evaluates to:
    \[ \dim S_d^1(\Delta) = 7d^2 - 15d + 11. \]
\end{example}

\begin{figure}[htbp]
    \centering
    \begin{tikzpicture}[line cap=round,line join=round,>=triangle 45,x=1.0cm,y=1.0cm,scale=0.8]
        \draw [line width=1.pt] (2,2) -- (5,4);
        \draw [line width=1.pt] (2,2) -- (0,0);
        \draw [line width=1.pt] (2,2) -- (2,0);
        \draw [line width=1.pt] (2,2) -- (1,4);
        \draw [line width=1.pt] (1,4) -- (3.5,3);
        \draw [line width=1.pt] (3.5,3) -- (2,0);
        \draw [line width=1.pt] (1,4) -- (0,0);
        \draw [line width=1.pt] (3.5,3) -- (4,6);
        \draw [line width=1.pt] (1,4) -- (4,6);
        \draw [line width=1.pt] (5,4) -- (4,6);
        \draw [line width=1.pt] (5,4) -- (2,0);
        \draw [line width=1.pt] (5,4) -- (12,0);
        \draw [line width=1.pt] (0,0) -- (12,0);
        \draw [line width=1.pt] (9,4) -- (11,4);
        \draw [line width=1.pt] (4,6) -- (9,4);
        \draw [line width=1.pt] (12,6) -- (11,4);
        \draw [line width=1.pt] (12,6) -- (12,0);
        \draw [line width=1.pt] (11,4) -- (12,0);
        \draw [line width=1.pt] (9,4) -- (12,0);
        \draw [line width=1.pt] (4,6) -- (9,8);
        \draw [line width=1.pt] (9,8) -- (12,6);
        \draw [line width=1.pt] (9,8) -- (9,4);
        \draw [line width=1.pt] (9,8) -- (11,4);
        \begin{scriptsize}
        \draw [fill=black] (2.,2.) circle (2.5pt);
        \draw[color=black] (2.14,2.37) node {$A$};
        \draw [fill=black] (5.,4.) circle (2.5pt);
        \draw[color=black] (5.14,4.37) node {$B$};
        \draw [fill=black] (3.5,3.) circle (2.5pt);
        \draw[color=black] (3.3,3.5) node {$C$};
        \draw [fill=black] (9.,4.) circle (2.5pt);
        \draw[color=black] (8.8,3.77) node {$D$};
        \draw [fill=black] (11.,4.) circle (2.5pt);
        \draw[color=black] (11.3,4.) node {$E$};
        \end{scriptsize}
    \end{tikzpicture}
    \caption{A rectilinear partition $\Delta$ featuring two disjoint truncated $l$-edges $AB$ and $DE$.}
    \label{fig:example_partition}
\end{figure}

Finally, we prove that the dimension of the spline space satisfying the conditions in Theorem~\ref{thm:dimension of disjoint L-edges partition} is equal to the lower bound established for arbitrary triangulations in~\cite{schumaker1979dimension}. As our partitions are rectilinear and not necessarily triangulated, we first define a formal lower bound for such spaces.

\begin{definition}
    For any spline space over a rectilinear partition $\Delta$, the \textbf{formal lower bound} is defined as:
    \begin{equation}\label{dimension: lower bound}
        \text{Low}(S_d^{\mu}(\Delta))=\binom{d+2}{2}+\binom{d-\mu+1}{2}E-\left[\binom{d+2}{2}-\binom{\mu+2}{2}\right]V+\sum\limits_{i=1}^V\sigma_i,
    \end{equation}
    where $E$ is the number of interior edges, $V$ is the number of interior vertices, and $\sigma_i=\sum\limits_{j=1}^{d-\mu}(\mu+1+j(1-N_i))_{+}$ with $N_i$ being the number of distinct edge directions incident to interior vertex $A_i$.
\end{definition}

\begin{remark}
    Schumaker~\cite{schumaker1979dimension} demonstrated that \eqref{dimension: lower bound} is the lower bound for triangulations. For general rectilinear partitions, we utilize this expression as a formal lower bound for comparative analysis.
\end{remark}

We now show that under the condition $N_i\ge\mu+3$, the dimension formula \eqref{eq:dimension formula special partition} is identical to \eqref{dimension: lower bound}. First, we establish a topological relationship between the partition parameters.

\begin{lemma}\label{lem:topology relationship}
    For any rectilinear partition $\Delta$ with disjoint truncated $l$-edges, the following identity holds:
    \begin{equation}
        E - \sum_{i=1}^V N_i = c - t.
    \end{equation}
\end{lemma}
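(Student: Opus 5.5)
We prove the identity by counting interior edges from the side of the interior vertices, organizing the count by the $l$-edges passing through each vertex.

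\textbf{Step 1: count edge-ends at vertices.} Every interior edge has two endpoints. Each endpoint is either an interior vertex or a boundary vertex. Around an interior vertex $A_i$, the incident interior edges group by line: a line through $A_i$ contributes either one edge (when $A_i$ is an endpoint of the $l$-edge on that line) or two collinear edges (when $A_i$ is an interior point of that $l$-edge). Hence the number of distinct edge directions at $A_i$ is
\[
N_i=\#\{\text{edges at }A_i\}-\#\{l\text{-edges passing through }A_i\text{ in their interior}\}.
\]

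\textbf{Step 2: sum over interior vertices.} Summing the number of edges at each interior vertex counts every interior edge twice, except that an endpoint lying on the boundary is not counted. Summing the second term over all $A_i$ counts each interior $l$-edge $e$ once for every interior vertex of $e$ other than its endpoints. So it suffices to work per $l$-edge. Let $e$ have $r_e$ edges. Its edges contribute $2r_e-\beta_e$ edge-ends at interior vertices, where $\beta_e\in\{0,1,2\}$ is the number of endpoints of $e$ on the boundary. Its interior points contribute $r_e-1$ pass-through incidences. Therefore $e$ contributes
\[
(2r_e-\beta_e)-(r_e-1)=r_e+1-\beta_e
\]
to $\sum_i N_i$. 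Summing over all $l$-edges gives
\[
\sum_i N_i = E+\#\text{TE}-\#\text{cross-cuts}=E+t-c,
\]
since truncated $l$-edges have $\beta_e=0$ (contribution $r_e+1$), rays have $\beta_e=1$ (contribution $r_e$), and cross-cuts have $\beta_e=2$ (contribution $r_e-1$). Rearranging yields $E-\sum_i N_i=c-t$.

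\textbf{Main obstacle.} The delicate point is Step 1: it requires that two edges at $A_i$ lying on the same line belong to one $l$-edge passing through $A_i$, and that non-collinear edges give distinct directions. This is exactly the definition of $l$-edges as maximal segments together with the convention of Corollary~\ref{cor:local dimension}. A second subtlety is that an $l$-edge endpoint could be an interior point of another segment, but this only affects which $l$-edge an edge belongs to, not the count above.

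Notice that disjointness of the truncated $l$-edges is never used. The identity therefore holds for any rectilinear partition, with $t$ the number of truncated $l$-edges. We simply verify this rather than invoke the hypothesis.
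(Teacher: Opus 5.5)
Your proof is correct and is essentially the paper's argument. Your per-$l$-edge contribution $r_e+1-\beta_e$ is exactly the number $v(l_k)$ of interior vertices on that $l$-edge, so Steps 1--2 amount to the paper's double count $\sum_i N_i=\sum_k v(l_k)$ combined with $E=\sum_k v(l_k)+c-t$; the edge-end bookkeeping is just a detour to the identification of $N_i$ with the number of $l$-edges through $A_i$, which the paper uses directly (and, as you note, the paper's proof does not use disjointness either).
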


\begin{proof}
Let the sets of all cross-cuts, rays and truncated $l$-edges in \(\Delta\) be \(C(\Delta)\), \(R(\Delta)\), and \(TE(\Delta)\) respectively, with \(c = |C(\Delta)|\), \(r = |R(\Delta)|\), and \(t = |TE(\Delta)|\).

For each $l$-edge \(l_k\), let \(v(l_k)\) be the number of interior vertices on \(l_k\), and let \(e(l_k)\) be the number of interior edge segments on \(l_k\). Then we have:
\begin{enumerate}
    \item If \(l_k\) is a cross-cut, then \(e(l_k) = v(l_k) + 1\);
    \item If \(l_k\) is a ray, then \(e(l_k) = v(l_k)\);
    \item If \(l_k\) is a truncated $l$-edge, then \(e(l_k) = v(l_k) - 1\).
\end{enumerate}

The total number of interior edges is
\[
E = \sum_{l_k \in C(\Delta)} (v(l_k)+1) + \sum_{l_k \in R(\Delta)} v(l_k) + \sum_{l_k \in TE(\Delta)} (v(l_k)-1).
\]
This can be rewritten as
\[
E = \sum_{\text{all } l_k} v(l_k) + c - t.
\]

By double counting the incidences between interior vertices and $l$-edges, we obtain \(\sum_{\text{all } l_k} v(l_k) = \sum_{i=1}^V N_i\), where \(N_i\) is the number of $l$-edges passing through the interior vertex \(A_i\). Therefore,
\[
E = \sum_{i=1}^V N_i + c - t,
\]
which gives the topological identity
\[
E - \sum_{i=1}^V N_i = c - t.
\]
\end{proof}

The following lemmas simplify the terms $\sigma_i$ and $k_d^{\mu}(N_i)$ when $N_i\ge\mu+3$.

\begin{lemma}[\cite{wang2013multivariate}]
    The expression for $k_d^{\mu}(N)$ is:
    \[ k_d^{\mu}(N)=\frac{1}{2} \left( d - \mu - \left\lfloor \frac{\mu + 1}{N-1} \right\rfloor \right)_{+} \cdot \Big( (N-1)d - (N+1)\mu + (N-3) + (N-1) \left\lfloor \frac{\mu + 1}{N-1} \right\rfloor\Big). \]
\end{lemma}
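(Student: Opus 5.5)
We prove the closed form by evaluating the finite sum $k_d^{\mu}(N)=\sum_{j=1}^{d-\mu}\bigl(N(d-\mu-j+1)-(d-j+2)\bigr)_+$ directly. Reindex with $i=d-\mu-j+1$, so $i$ runs from $1$ to $d-\mu$, and note that $d-j+2=\mu+1+i$. The summand then becomes
\[
\bigl(Ni-(\mu+1+i)\bigr)_+=\bigl((N-1)i-(\mu+1)\bigr)_+ .
\]
Since $(N-1)i-(\mu+1)$ is increasing in $i$ when $N\ge 2$, the positive part vanishes exactly for $i\le \lfloor(\mu+1)/(N-1)\rfloor$. The only boundary case is $(N-1)i=\mu+1$, where the term is $0$ whether we include it or not.

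Set $q=\lfloor(\mu+1)/(N-1)\rfloor$. We split into two cases.

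\emph{Case $q\ge d-\mu$.} Every term vanishes, so $k_d^{\mu}(N)=0$. The factor $(d-\mu-q)_+$ in the claimed formula is also $0$, so the formula holds.

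\emph{Case $q<d-\mu$.} The sum is $\sum_{i=q+1}^{d-\mu}\bigl((N-1)i-(\mu+1)\bigr)$, an arithmetic progression with $n=d-\mu-q$ terms. Its value is $n/2$ times the sum of the first and last terms:
\[
\frac{n}{2}\Bigl[(N-1)(q+1)-(\mu+1)+(N-1)(d-\mu)-(\mu+1)\Bigr].
\]
Expanding the bracket gives
\[
(N-1)d-(N-1)\mu-2\mu-2+(N-1)q+(N-1).
\]
Collect the $\mu$ terms, $-(N-1)\mu-2\mu=-(N+1)\mu$, and the constants, $-2+N-1=N-3$. The bracket becomes $(N-1)d-(N+1)\mu+(N-3)+(N-1)q$, which is exactly the second factor in the statement. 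Since $n>0$ in this case, $n=(d-\mu-q)_+$, which gives the first factor.

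We assume $N\ge 2$ so that $q$ is defined. This holds for interior vertices, which have at least two distinct edge directions. The only real care needed is handling the boundary case $(N-1)i=\mu+1$ correctly and confirming that the positive part in the statement matches the case split. There is no substantive obstacle.
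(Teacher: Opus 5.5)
Your proof is correct. The reindexing $i=d-\mu-j+1$ turns the summand into $((N-1)i-(\mu+1))_+$. The terms with $i\le q=\lfloor(\mu+1)/(N-1)\rfloor$ are exactly the nonpositive ones. The remaining arithmetic progression of $d-\mu-q$ terms sums to the stated product, and the case $q\ge d-\mu$ is absorbed by the factor $(\cdot)_+$.

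The paper does not prove this lemma. It imports the closed form by citation from \cite{wang2013multivariate} and uses it only in Lemma~\ref{lem: sigma kdmu}, where $N_i\ge\mu+3$ forces $q=0$. Your direct evaluation is therefore a genuinely different, self-contained route, and it costs only a few lines.

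It also makes explicit two things the citation leaves implicit. First, it states the standing assumption $N\ge 2$, which is needed both for the floor to be defined and for monotonicity in $i$. Second, it shows why the positive part is required only on the first factor. When $d-\mu-q>0$, the second factor equals the first term plus the last term of a progression of strictly positive terms, so it is automatically positive.

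Your computation also makes the later specialization transparent. For $N\ge\mu+3$, the sum simply runs over all $i=1,\dots,d-\mu$.
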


\begin{lemma}\label{lem: sigma kdmu}
    In the case of $N_i\ge \mu+3$, we have:
    \begin{itemize}
        \item $\sigma_i=0$ for $i=1,2,\ldots,V$.
        \item $k_d^{\mu}(N_i)=\frac{1}{2} \left( d - \mu \right) \cdot \Big( (N_i-1)d - (N_i+1)\mu + (N_i-3) \Big)$.
    \end{itemize}
\end{lemma}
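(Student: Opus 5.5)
Both claims follow by checking that every summand in the defining sums falls on the "nontrivial" side of its positive part, after which the remaining sum is computed in closed form. We treat the two bullets separately.

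\emph{First bullet.} By definition $\sigma_i=\sum_{j=1}^{d-\mu}(\mu+1+j(1-N_i))_+$. For $j\ge 1$ and $N_i\ge\mu+3$ we have $j(1-N_i)\le 1-N_i\le -(\mu+2)$. Hence
\[
\mu+1+j(1-N_i)\le \mu+1-(\mu+2)=-1<0 .
\]
Every term of the sum is therefore $0$, so $\sigma_i=0$. In fact $N_i\ge\mu+2$ would already suffice here.

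\emph{Second bullet.} Start from the definition $k_d^{\mu}(N_i)=\sum_{j=1}^{d-\mu}\bigl(N_i(d-\mu-j+1)-(d-j+2)\bigr)_+$ in Theorem~\ref{thm:main}.

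First, show that each bracket is nonnegative. Set $m=d-\mu-j+1$, so that $1\le m\le d-\mu$ and $d-j+2=m+\mu+1$. The bracket then equals $(N_i-1)m-(\mu+1)$. Since $m\ge1$ and $N_i-1\ge\mu+2$, this is at least $\mu+2-(\mu+1)=1>0$. This is essentially the inequality already used in the proof of Theorem~\ref{thm:dimension of disjoint L-edges partition}. The positive parts can therefore be dropped.

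Next, sum the arithmetic progression:
\[
k_d^{\mu}(N_i)=\sum_{m=1}^{d-\mu}\bigl((N_i-1)m-(\mu+1)\bigr)=(N_i-1)\frac{(d-\mu)(d-\mu+1)}{2}-(d-\mu)(\mu+1).
\]
Factoring out $\tfrac12(d-\mu)$ leaves the bracket
\[
(N_i-1)(d-\mu+1)-2(\mu+1)=(N_i-1)d-(N_i+1)\mu+(N_i-3).
\]
This is exactly the claimed formula.

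As a cross-check, one can instead use the closed form of the cited lemma from \cite{wang2013multivariate}. Since $N_i-1\ge\mu+2>\mu+1$, we get $\lfloor(\mu+1)/(N_i-1)\rfloor=0$, and since $d-\mu\ge0$ the factor $(d-\mu)_+$ equals $d-\mu$; substituting gives the same expression. There is no real obstacle in either bullet. The only care needed is the reindexing $m=d-\mu-j+1$, which makes positivity of every summand transparent, and handling the trivial case $d=\mu$, where both sides vanish.
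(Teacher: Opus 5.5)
Your proof is correct. The first bullet matches the paper's argument: the paper phrases it as monotonicity of $f(j)=\mu+1+j(1-N_i)$ with maximum $f(1)\le -1$.

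The second bullet takes a different route. The paper observes $\lfloor(\mu+1)/(N_i-1)\rfloor=0$ and substitutes into the closed form quoted from \cite{wang2013multivariate}. You instead work from the defining sum in Theorem~\ref{thm:main}: you reindex by $m=d-\mu-j+1$ to get summands $(N_i-1)m-(\mu+1)$, check that they are all positive, and sum the arithmetic progression.

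The paper's version is a one-line substitution, but it rests entirely on the cited closed form agreeing with the paper's own definition of $k_d^{\mu}$. Yours is self-contained. In effect it re-derives the $\lfloor(\mu+1)/(N_i-1)\rfloor=0$ case of that closed form, and so also confirms the two are consistent.

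Your route also only uses nonnegativity of the summands. This shows that both bullets already hold for $N_i\ge\mu+2$, where the $m=1$ summand is exactly $0$. That is not visible from the floor-substitution argument, since there the floor equals $1$ at $N_i=\mu+2$.
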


\begin{proof}
    \begin{itemize}
        \item For a fixed $i$, since $N_i\ge \mu+3$, then $1-N_i \le -(\mu+2)<0$. For $j \in \{1, \dots, d-\mu\}$, the function $f(j)=\mu+1+j(1-N_i)$ is strictly decreasing. Its maximum occurs at $j=1$, where $f(1) = \mu+2-N_i \le \mu+2-(\mu+3) = -1 < 0$. Consequently, $(\mu+1+j(1-N_i))_{+}=0$ for all $j$, and thus $\sigma_i=0$.
        \item Given $N_i \ge \mu+3$, it follows that $0 \le \frac{\mu+1}{N_i-1} < 1$, hence $\lfloor \frac{\mu + 1}{N_i-1} \rfloor = 0$. Substituting this into the formula for $k_d^{\mu}(N_i)$ yields the result.
    \end{itemize}
\end{proof}

\begin{theorem}\label{thm:low=dim}
    Let $\Delta$ be a partition with disjoint truncated $l$-edges. If the number of distinct edge directions $N_i$ satisfies $N_i \ge \mu + 3$ for all $i$, then:
    \[ \dim S_d^{\mu}(\Delta) = \text{Low}(S_d^{\mu}(\Delta)). \]
\end{theorem}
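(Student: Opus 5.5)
The plan is to compare the two closed-form expressions directly: the dimension formula \eqref{eq:dimension formula special partition} from Theorem~\ref{thm:dimension of disjoint L-edges partition}, and the formal lower bound \eqref{dimension: lower bound}. Both contain the term $\binom{d+2}{2}$, so it cancels. Write $B:=\binom{d-\mu+1}{2}$. By Lemma~\ref{lem: sigma kdmu}, the hypothesis $N_i\ge\mu+3$ gives $\sigma_i=0$ for every $i$. Hence it suffices to prove
\[
(c-t)B+\sum_{i=1}^V k_d^{\mu}(N_i)=BE-\Bigl[\tbinom{d+2}{2}-\tbinom{\mu+2}{2}\Bigr]V .
\]

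Next I use Lemma~\ref{lem:topology relationship} to replace $c-t$ by $E-\sum_{i=1}^V N_i$. The terms $BE$ then cancel on both sides. What remains is a sum over vertices, so it is enough to verify, for each interior vertex,
\[
k_d^{\mu}(N_i)-BN_i=-\Bigl[\tbinom{d+2}{2}-\tbinom{\mu+2}{2}\Bigr].
\]
One point needs care here. In Lemma~\ref{lem:topology relationship}, $N_i$ is counted as the number of $l$-edges through $A_i$. This equals the number of distinct edge directions at $A_i$, because collinear edges through a vertex belong to the same $l$-edge. I will state this explicitly so that the two uses of $N_i$ agree.

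For the vertex identity, I will not use the closed form of $k_d^\mu$. Instead I return to the definition $k_d^{\mu}(N)=\sum_{j=1}^{d-\mu}\bigl(N(d-\mu-j+1)-(d-j+2)\bigr)_+$. The proof of Theorem~\ref{thm:dimension of disjoint L-edges partition} shows that when $N\ge\mu+3$, each summand is at least $d-\mu-j+1>0$, so the positive-part operator can be dropped.

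The sum then splits into two pieces. First, $N\sum_{j=1}^{d-\mu}(d-\mu-j+1)=N\sum_{k=1}^{d-\mu}k=NB$. Second, $\sum_{j=1}^{d-\mu}(d-j+2)=\sum_{m=\mu+2}^{d+1}m=\binom{d+2}{2}-\binom{\mu+2}{2}$. Substituting these two evaluations gives exactly the required vertex identity. Summing it over $i=1,\dots,V$ completes the proof.

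I do not expect a real obstacle. The only delicate points are removing the $(\cdot)_+$, which needs $N_i\ge\mu+3$, and checking that the meaning of $N_i$ is the same in the topological lemma and in the local dimension count. As a consistency check, I can use the closed form in Lemma~\ref{lem: sigma kdmu}: it should agree with $NB-\binom{d+2}{2}+\binom{\mu+2}{2}$ after expansion.
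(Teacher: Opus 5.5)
Your argument is correct and is essentially the paper's: cancel $\binom{d+2}{2}$, then use $\sigma_i=0$ and Lemma~\ref{lem:topology relationship} to reduce everything to the per-vertex identity $k_d^{\mu}(N_i)=N_i\binom{d-\mu+1}{2}-\binom{d+2}{2}+\binom{\mu+2}{2}$. Two points go beyond the paper: you check this identity from the defining sum of $k_d^{\mu}$, where the paper substitutes the closed form and only says ``verification shows'', and you reconcile the two meanings of $N_i$, which the paper glosses over. Keeping both positive parts, $x_+-(-x)_+=x$ in fact gives $k_d^{\mu}(N_i)-\sigma_i=N_i\binom{d-\mu+1}{2}-\binom{d+2}{2}+\binom{\mu+2}{2}$ for every $N_i$, so the hypothesis $N_i\ge\mu+3$ is needed only through Theorem~\ref{thm:dimension of disjoint L-edges partition}.
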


\begin{proof}
    Consider the difference $\text{Low}(S_d^{\mu}(\Delta)) - \dim S_d^{\mu}(\Delta)$. Using Theorem~\ref{thm:dimension of disjoint L-edges partition} and Lemma~\ref{lem:topology relationship}, we expand the terms:
    \[ \text{Low}(S_d^{\mu}(\Delta)) - \dim S_d^{\mu}(\Delta) = \sum\limits_{i=1}^V \left[ N_i \binom{d-\mu+1}{2} + \sigma_i - \left( \binom{d+2}{2} - \binom{\mu+2}{2} \right) - k_d^{\mu}(N_i) \right]. \]
    By Lemma~\ref{lem: sigma kdmu}, we substitute $\sigma_i=0$ and the simplified $k_d^{\mu}(N_i)$. Verification shows that:
    \[N_i\binom{d-\mu+1}{2}+\sigma_i-\binom{d+2}{2}+\binom{\mu+2}{2}-k_d^{\mu}(N_i)\equiv0\ \ \text{for}\ i=1,2,\ldots,V. \]
    Therefore, the difference is zero, and the dimensions are equal.
\end{proof}

Theorem~\ref{thm:low=dim} demonstrates that for a partition $\Delta$ with disjoint truncated $l$-edges satisfying $N_i \ge \mu+3$, the dimension of the spline space $S_d^{\mu}(\Delta)$ coincides with Schumaker’s lower bound. This result implies that the lower bound is attainable in specific cases for arbitrary degree $d$ and smoothness order $\mu$.

\section{Computational Examples}
In this section, we apply the framework developed in Section~\ref{section3} to two well-known examples. The first is the spline space $S_2^1(\Delta_{\mathrm{MS}})$  over the Morgan--Scott partition. The second is the spline space $S_5^2(\Delta_{\mathrm{YS}})$ over the partition as shown in Figure~\ref{fig:yuan-stillman partition}, which we denote by $\Delta_{\mathrm{YS}}$ and refer to as the Yuan--Stillman partition. This partition was introduced by Yuan and Stillman~\cite{yuan2019counter} as a counterexample to the Schenck--Stiller ``$2r+1$'' conjecture.

\subsection{Spline space $S_2^1(\Delta_{MS})$ over Morgan-Scott partition}
We now illustrate the complete procedure of Algorithm~\ref{alg:matrix} by computing the dimension of the spline space $S_2^1(\Delta_{MS})$ over the Morgan-Scott partition step by step.
\begin{itemize}
    \item[Step 1.] Identify the set $TE(\Delta_{\mathrm{MS}})$ as shown in Figure~\ref{fig:MS partition}. 
    \item[Step 2.] In $S_2^1(\Delta_{\mathrm{MS}})$, the edge cofactors are constants. Following Algorithm~\ref{alg:matrix}, we determine a basis of $W_1[\cdot]$ at each interior vertex $D$, $E$, and $F$. For vertex $D$, the cofactors $q_{DC}$, $q_{DA}$, $q_{DE}$, and $q_{DF}$ satisfy the local conformality condition:
    $$\begin{pmatrix}
        1 & 1 & 1 &1\\ 2\gamma_{DC} & 2\gamma_{DA} & 2\gamma_{DE} & 2\gamma_{DF}\\\gamma_{DC}^2 & \gamma_{DA}^2 & \gamma_{DE}^2 & \gamma_{DF}^2 
    \end{pmatrix}\begin{pmatrix}
        q_{DC}\\q_{DA}\\q_{DE}\\q_{DF}
    \end{pmatrix}=\begin{pmatrix}
        0\\0\\0\\0
    \end{pmatrix}.$$
    The solution is
$$\begin{pmatrix}
q_{DC} \\[6pt]
q_{DA} \\[6pt]
q_{DE} \\[6pt]
q_{DF}
\end{pmatrix}
=
h_D
\begin{pmatrix}
\dfrac{1}{(\gamma_{DC} - \gamma_{DA})(\gamma_{DC} - \gamma_{DE})(\gamma_{DC} - \gamma_{DF})} \\[10pt]
\dfrac{1}{(\gamma_{DA} - \gamma_{DC})(\gamma_{DA} - \gamma_{DE})(\gamma_{DA} - \gamma_{DF})} \\[10pt]
\dfrac{1}{(\gamma_{DE} - \gamma_{DC})(\gamma_{DE} - \gamma_{DA})(\gamma_{DE} - \gamma_{DF})} \\[10pt]
\dfrac{1}{(\gamma_{DF} - \gamma_{DC})(\gamma_{DF} - \gamma_{DA})(\gamma_{DF} - \gamma_{DE})}
\end{pmatrix},
\quad h_D \in \mathbb{R}.$$
Analogous expressions hold for the interior vertices $E$ and $F$.

\item[Step 3.]We now enforce the global conformality conditions along the truncated $l$-edges $DE$, $DF$, and $EF$. For edge $DE$, the condition requires $q_{DE} + q_{ED} = 0$. Substituting the expressions from Step 2 yields
$$\frac{h_D}{(\gamma_{DE} - \gamma_{DC})(\gamma_{DE} - \gamma_{DA})(\gamma_{DE} - \gamma_{DF})}+\frac{h_E}{(\gamma_{ED} - \gamma_{EC})(\gamma_{ED} - \gamma_{EB})(\gamma_{ED} - \gamma_{EF})}=0.$$
The same procedure applies to edges $DF$ and $EF$.

\item[Step 4.] By assembling the three equations obtained above, the conformality matrix $M(TE(\Delta_{\mathrm{MS}}))$ is 
\begin{equation}\label{eq:global conformality conditions of TE}
    M(TE(\Delta_{\mathrm{MS}}))=\begin{pmatrix}
m_{DE}^D & m_{ED}^E & 0 \\[4pt]
m_{DF}^D & 0 & m_{FD}^F \\[4pt]
0 & m_{EF}^E & m_{FE}^F
\end{pmatrix}.
\end{equation}
where $m_{DE}^D:=\frac{1}{(\gamma_{DE} - \gamma_{DC})(\gamma_{DE} - \gamma_{DA})(\gamma_{DE} - \gamma_{DF})}$ and the remaining entries are defined analogously.

\item[Step 5.] The rank of the conformality matrix is
$$\mathrm{rank}\,M(TE(\Delta_{\mathrm{MS}})) = 
\begin{cases}
2 & \text{if } m_{DE}^D m_{FD}^F m_{EF}^E + m_{ED}^E m_{DF}^D m_{FE}^F = 0, \\
3 & \text{if } m_{DE}^D m_{FD}^F m_{EF}^E + m_{ED}^E m_{DF}^D m_{FE}^F \neq 0.
\end{cases}$$
The condition $m_{DE}^D m_{FD}^F m_{EF}^E + m_{ED}^E m_{DF}^D m_{FE}^F = 0$ is equivalent to
$$\underbrace{\frac{(\gamma_{DE}-\gamma_{DA})(\gamma_{DE}-\gamma_{DC})}{(\gamma_{DF}-\gamma_{DA})(\gamma_{DF}-\gamma_{DC})}}_{R_D}
\cdot
\underbrace{\frac{(\gamma_{EF}-\gamma_{EB})(\gamma_{EF}-\gamma_{EC})}{(\gamma_{DE}-\gamma_{EB})(\gamma_{DE}-\gamma_{EC})}}_{R_E}
\cdot
\underbrace{\frac{(\gamma_{FD}-\gamma_{FA})(\gamma_{FD}-\gamma_{FB})}{(\gamma_{EF}-\gamma_{FA})(\gamma_{EF}-\gamma_{FB})}}_{R_F}
= 1.$$
As shown in Appendix~\ref{appendix}, this relation holds if and only if the lines $AE$, $BD$, and $CF$ are concurrent. Therefore,
$$\dim S_2^1(\Delta_{\mathrm{MS}}) = 
\begin{cases}
7 & \text{if } AE,\ BD,\ \text{and}\ CF\ \text{are concurrent}, \\
6 & \text{otherwise}.
\end{cases}$$
    
\end{itemize}


\subsection{Spine space $S_5^2(\Delta_{\mathrm{YS}})$ over Yuan-Stillman partition}
In this subsection, we compute the dimension of the spline space $S_5^2(\Delta_{\mathrm{YS}})$ over the Yuan--Stillman partition $\Delta_{\mathrm{YS}}$ as shown in Figure~\ref{fig:yuan-stillman partition}.

\begin{figure}[ht]
    \centering
    \begin{tikzpicture}[line cap=round,line join=round,>=triangle 45,x=1cm,y=1cm,scale=0.8]
\draw [line width=1pt] (3,4)-- (3,0);     
\draw [line width=1pt] (3,0)-- (3,-4);    
\draw [line width=1pt] (3,-4)-- (-1,-4);  
\draw [line width=1pt] (-1,-4)-- (-1,0);  
\draw [line width=1pt] (-1,0)-- (-1,4);   
\draw [line width=1pt] (-1,4)-- (3,4);    
\draw [line width=1pt] (3,0)-- (-1,0);    
\draw [line width=1pt] (-1,0)-- (3,4);    
\draw [line width=1pt] (3,0)-- (-1,4);    
\draw [line width=1pt] (3,0)-- (-1,-4);   
\draw [line width=1pt] (3,-4)-- (-1,0);   
\draw [line width=1pt] (1,2)-- (0,0);     
\draw [line width=1pt] (0,0)-- (1,-2);    

\begin{scriptsize}
\draw [fill=black] (-1,4) circle (2pt);
\draw[color=black] (-1.3,4.3) node {$A(-1,4)$};
\draw [fill=black] (3,4) circle (2pt);
\draw[color=black] (3.3,4.3) node {$B(3,4)$};
\draw [fill=black] (3,0) circle (2pt);
\draw[color=black] (3.8,0.3) node {$C(3,0)$};
\draw [fill=black] (3,-4) circle (2pt);
\draw[color=black] (3.3,-4.3) node {$D(3,-4)$};
\draw [fill=black] (-1,-4) circle (2pt);
\draw[color=black] (-1.3,-4.3) node {$E(-1,-4)$};
\draw [fill=black] (-1,0) circle (2pt);
\draw[color=black] (-2,0.3) node {$F(-1,0)$};
\draw [fill=black] (0,0) circle (2pt);
\draw[color=black] (0.9,0.3) node {$O(0,0)$};
\draw [fill=black] (1,2) circle (2pt);
\draw[color=black] (1.,2.7) node {$P(1,2)$};
\draw [fill=black] (1,-2) circle (2pt);
\draw[color=black] (1.,-2.8) node {$Q(1,-2)$};
\end{scriptsize}
\end{tikzpicture}
    \caption{\label{fig:yuan-stillman partition}Yuan-Stillman Partition}
\end{figure}

Algorithm~\ref{alg:matrix} goes as follows:
\begin{itemize}
    \item[Step 1.] Identify the set $TE(\Delta_{\mathrm{YS}})$ as shown in Figure~\ref{fig:TE of y-s partition}. 
\begin{figure}[ht]
    \centering
    \begin{tikzpicture}[line cap=round,line join=round,>=triangle 45,x=1cm,y=1cm,scale=0.8]
\draw [line width=1pt] (1,2)-- (0, 0);
\draw [line width=1pt] (0,0)-- (1,-2);
\begin{scriptsize}
\draw [fill=black] (0,0) circle (2pt);
\draw[color=black] (-0.5,0) node {$O$};
\draw [fill=black] (1,2) circle (2pt);
\draw[color=black] (1,2.36) node {$P$};
\draw [fill=black] (1,-2) circle (2pt);
\draw[color=black] (1,-2.36) node {$Q$};
\draw (-0.8,-3.5) node[anchor=north west] {\normalsize$TE(\Delta_{\mathrm{YS}})$};
\end{scriptsize}
\end{tikzpicture}
\caption{\label{fig:TE of y-s partition}The TE-connected component of Yuan-Stillman Partition}
\end{figure}

\item[Step 2.] 
In $S_5^2(\Delta_{\mathrm{YS}})$, the edge cofactors are quadratic polynomials. 
Following Algorithm~\ref{alg:matrix}, we compute bases of $W_1[\cdot]$, $W_2[\cdot]$, and $W_3[\cdot]$ at the interior vertices $P$, $Q$, and $O$. It's easy to check that $\dim W_3[\cdot]=0$ for $P$, $Q$, and $O$ by Lemma~\ref{lem:local dimension}.

As an illustration, consider $W_2[P]$. 
By Corollary~\ref{cor:local linear equation}, this space corresponds to the degree-one component of the (decoupled) edge cofactors at $P$.

These components, denoted
$$f_1(q_{PO}^{(P)}), \quad f_1(q_{PB}^{(P)}+q_{PF}^{(P)}), \quad f_1(q_{PA}^{(P)}+q_{PC}^{(P)}),$$
satisfy the local conformality condition:
\begin{equation}\label{eq:conformality condition for edge cofactors in P}
\begin{pmatrix}
1 & 0 & 1 & 0 & 1 & 0 \\
\binom{4}{1} & 1 & -\binom{4}{1} & 1 & -2\binom{4}{1} & 1 \\
\binom{4}{2} & \binom{3}{1} & \binom{4}{2} & -\binom{3}{1} & 4\binom{4}{2} & -2\binom{3}{1} \\
\binom{4}{3} & \binom{3}{2} & -\binom{4}{3} & \binom{3}{2} & -8\binom{4}{3} & 4\binom{3}{2} \\
\binom{4}{4} & \binom{3}{3} & \binom{4}{4} & -\binom{3}{3} & 16\binom{4}{4} & -8\binom{3}{3}
\end{pmatrix}
\begin{pmatrix}
c_{1,0} \\ c_{1,1} \\ c_{2,0} \\ c_{2,1} \\ c_{3,0} \\ c_{3,1}
\end{pmatrix}
=
\begin{pmatrix}
0 \\ 0 \\ 0 \\ 0 \\ 0 \\ 0
\end{pmatrix},
\end{equation}
where 
\begin{align*}
f_1(q_{PA}^{(P)}+q_{PC}^{(P)}) 
&= c_{1,0}(x+y-3) + c_{1,1}(x-1), \\
f_1(q_{PB}^{(P)}+q_{PF}^{(P)}) 
&= c_{2,0}(-x+y-1) + c_{2,1}(x-1), \\
f_1(q_{PO}^{(P)}) 
&= c_{3,0}(-2x+y) + c_{3,1}(x-1).
\end{align*}

The solution of equation~\eqref{eq:conformality condition for edge cofactors in P} is
$$
\begin{pmatrix}
c_{1,0} \\ c_{1,1} \\ c_{2,0} \\ c_{2,1} \\ c_{3,0} \\ c_{3,1}
\end{pmatrix}
= s_1^{(P)} \begin{pmatrix}
-5  \\ 12 \\ -27 \\ 108 \\ 32 \\ 48
\end{pmatrix}, \qquad s_1^{(P)} \in \mathbb{R}.
$$ 
Thus,
\begin{align*}
f_1(q_{PB}^{(P)} + q_{PF}^{(P)}) &= s_1^{(P)} (7x - 5y +3), \\
f_1(q_{PA}^{(P)} + q_{PC}^{(P)}) &= 27s_1^{(P)} (5x - y - 3), \\
f_1(q_{PO}^{(P)}) &= 16s_1^{(P)} (- x+2y  - 3).
\end{align*}

Bases for $W_1[P]$, and for the spaces $W_j[O]$, $W_j[Q]$ ($j=1,2,3$) are obtained in the same manner. 

    \item[Step 3.] We now impose the global conformality conditions along the truncated $l$-edges $PO$ and $OQ$.
Consider edge $PO$ as an example. The condition requires 
$$q_{PO}^{(P)} + q_{OP}^{(O)} \equiv 0.$$ 
By Lemma~\ref{lem:homogeneous global conformality conditions}, this is equivalent to the three independent homogeneous conditions
$$\text{homo}_i(q_{PO}^{(P)}) + \text{homo}_i(q_{OP}^{(O)}) \equiv 0,\ i=0,1,2,$$
where $\text{homo}_i(\cdot)$ denotes the degree-$i$ homogeneous component.

By Step2, we can get
\begin{align*}
    \text{homo}_2\left(q_{PO}^{(P)}\right) &= (4s_{2,1}^{(P)}-2s_{2,2}^{(P)}+s_{2,3}^{(P)})x^2+s_{2,1}^{(P)}y^2+(-4s_{2,1}^{(P)}+s_{2,2}^{(P)})xy,\\
    \text{homo}_1\left(q_{PO}^{(P)}\right) &=
    (2s_{2,2}^{(P)}-2s_{2,3}^{(P)}-16s_1^{(P)})x+(32s_1^{(P)}-s_{2,2}^{(P)})y,\\
    \text{homo}_0\left(q_{PO}^{(P)}\right) &=
    s_{2,3}^{(P)}-48s_1^{(P)},\\
    \text{homo}_2\left(q_{OP}^{(O)}\right) &= 
    (2s_{2,3}^{(O)} - 24s_{2,1}^{(O)})x^2 + (3s_{2,2}^{(O)} - 16s_{2,1}^{(O)})y^2+ (2s_{2,2}^{(O)} - 36s_{2,1}^{(O)} + 3s_{2,3}^{(O)})xy,\\
    \text{homo}_1\left(q_{OP}^{(O)}\right) &=
    2s_1^{(O)}x+3s_1^{(O)}y,\\
    \text{homo}_0\left(q_{PO}^{(P)}\right) &=
    0,
\end{align*}

Applying the homogeneous conditions $\text{homo}_i(q_{PO}^{(P)}) + \text{homo}_i(q_{OP}^{(O)}) \equiv 0$ for $i=0, 1, 2$ yields a system of 6 linear equations with 8 unknowns: $s_{2,1}^{(P)}, s_{2,2}^{(P)}, s_{2,3}^{(P)}, s_{2,1}^{(O)}, s_{2,2}^{(O)}, s_{2,3}^{(O)}, s_1^{(P)},$ and $s_1^{(O)}$. Similarly, imposing the global conformality condition $q_{QO}^{(Q)} + q_{OQ}^{(O)} \equiv 0$ provides the remaining scalar equations, leading to the following system:

$$\left(\begin{array}{cccccccccccc} 0 & 0 & 0 & 0 & 0 & 0 & 0 & 0 & -48 & 0 & 0 & 1\\ 3 & 0 & 0 & 0 & 0 & 0 & 0 & 0 & -32 & 0 & -1 & 0\\ 0 & -16 & 3 & 0 & 0 & 0 & 0 & 0 & 0 & 1 & 0 & 0\\ 2 & 0 & 0 & 0 & 0 & 0 & 0 & 0 & -16 & 0 & -2 & -2\\ 0 & -36 & 2 & 3 & 0 & 0 & 0 & 0 & 0 & 4 & 1 & 0\\ 0 & -24 & 0 & 2 & 0 & 0 & 0 & 0 & 0 & 4 & 2 & 1\\ 0 & 0 & 0 & 0 & -48 & 0 & 0 & 1 & 0 & 0 & 0 & 0\\ -3 & 0 & 0 & 0 & 32 & 0 & -1 & 0 & 0 & 0 & 0 & 0\\ 0 & -16 & -3 & 0 & 0 & 1 & 0 & 0 & 0 & 0 & 0 & 0\\ 2 & 0 & 0 & 0 & -16 & 0 & 2 & -2 & 0 & 0 & 0 & 0\\ 0 & 36 & 2 & -3 & 0 & -4 & 1 & 0 & 0 & 0 & 0 & 0\\ 0 & -24 & 0 & 2 & 0 & 4 & -2 & 1 & 0 & 0 & 0 & 0 \end{array}\right)
\begin{pmatrix}
s_1^{(O)}\\
s_{2,1}^{(O)} \\
s_{2,2}^{(O)} \\
s_{2,3}^{(O)}\\
s_1^{(P)}\\
s_{2,1}^{(P)} \\
s_{2,2}^{(P)} \\
s_{2,3}^{(P)} \\
s_1^{(Q)}\\
s_{2,1}^{(Q)} \\
s_{2,2}^{(Q)} \\
s_{2,3}^{(Q)} 
\end{pmatrix}=\begin{pmatrix}
0\\
0\\
0\\
0 \\
0 \\
0 \\
0 \\
0 \\
0 \\
0 \\
0 \\
0
\end{pmatrix},$$
where $s_{2,i}^{(\cdot)} (i=1,2,3)$ stands for the three freedoms of the degree-two component for interior vertex of $P$, $Q$ and $O$ respectively.  
    
    Numerical computation yields $\mathrm{rank}M(TE(\Delta_{\mathrm{YS}})) = 11$. Therefore,
$$\dim S_5^2(\Delta_{\mathrm{YS}}) = \binom{7}{2} + 5\binom{4}{2} + 4 \cdot 3 - 11 = 52.$$

\end{itemize}

This value is strictly greater than Schumaker’s lower bound of 51~\cite{schumaker1979dimension}, thereby confirming the counterexample of Yuan and Stillman~\cite{yuan2019counter} to the Schenck--Stiller ``\(2r+1\)'' conjecture.

\subsection{Mixed polygonal partition}
In this subsection, we present additional computational examples to further demonstrate the generality and flexibility of our approach. We consider a mixed polygonal partition $\Delta$ consisting of triangles and quadrilaterals, as illustrated in Figure~\ref{fig:mixed partition}. We study the spline space $  S_5^3(\Delta)  $ defined over this partition and compute its dimension by systematically applying Algorithm~\ref{alg:matrix} as follows:

\begin{figure}
    \centering
\begin{tikzpicture}[line cap=round,line join=round,>=triangle 45,x=1.0cm,y=1.0cm,scale=0.6]
\draw [line width=1.pt] (0.,0.)-- (0.,5.);
\draw [line width=1.pt] (0.,5.)-- (2.,5.);
\draw [line width=1.pt] (2.,5.)-- (6.,3.);
\draw [line width=1.pt] (6.,3.)-- (7.,0.);
\draw [line width=1.pt] (7.,0.)-- (0.,0.);
\draw [line width=1.pt] (2.,5.)-- (4.,8.);
\draw [line width=1.pt] (4.,8.)-- (8.,6.);
\draw [line width=1.pt] (8.,6.)-- (6.,3.);
\draw [line width=1.pt] (4.,8.)-- (3.,12.);
\draw [line width=1.pt] (3.,12.)-- (0.,5.);
\draw [line width=1.pt] (8.,6.)-- (10.,4.);
\draw [line width=1.pt] (3.,12.)-- (13.,10.);
\draw [line width=1.pt] (13.,10.)-- (8.,6.);
\draw [line width=1.pt] (13.,10.)-- (10.,4.);
\draw [line width=1.pt] (10.,4.)-- (7.,0.);
\draw [line width=1.pt] (0.,0.)-- (6.,3.);
\begin{scriptsize}
\draw [fill=black] (0.,0.) circle (2.5pt);
\draw[color=black] (-0.2,-0.5) node {$A=(0,0)$};
\draw [fill=black] (0.,5.) circle (2.5pt);
\draw[color=black] (-1.2,5.) node {$B=(0,5)$};
\draw [fill=black] (2.,5.) circle (2.5pt);
\draw[color=black] (3.3,5.) node {$C=(2,5)$};
\draw [fill=black] (6.,3.) circle (2.5pt);
\draw[color=black] (7.2,3.) node {$D=(6,3)$};
\draw [fill=black] (7.,0.) circle (2.5pt);
\draw[color=black] (7.14,-0.5) node {$E=(7,0)$};
\draw [fill=black] (4.,8.) circle (2.5pt);
\draw[color=black] (5.25,8.1) node {$F=(4,8)$};
\draw [fill=black] (8.,6.) circle (2.5pt);
\draw[color=black] (9.3,6.) node {$G=(8,6)$};
\draw [fill=black] (3.,12.) circle (2.5pt);
\draw[color=black] (3.14,12.37) node {$H=(3,12)$};
\draw [fill=black] (10.,4.) circle (2.5pt);
\draw[color=black] (11.35,4.) node {$J=(10,4)$};
\draw [fill=black] (13.,10.) circle (2.5pt);
\draw[color=black] (13.25,10.45) node {$I=(13,10)$};
\end{scriptsize}
\end{tikzpicture}
    \caption{\label{fig:mixed partition}A mixed polygonal partition $\Delta$ consisting of triangles, quadrilaterals, and pentagons}
\end{figure}

\begin{figure}
    \centering
    \begin{tikzpicture}[line cap=round,line join=round,>=triangle 45,x=1.0cm,y=1.0cm,scale=0.6]
\draw [line width=1.pt] (2.,5.)-- (6.,3.);
\draw [line width=1.pt] (2.,5.)-- (4.,8.);
\draw [line width=1.pt] (4.,8.)-- (8.,6.);
\draw [line width=1.pt] (8.,6.)-- (6.,3.);
\begin{scriptsize}
\draw [fill=black] (2.,5.) circle (2.5pt);
\draw[color=black] (2.,5.5) node {$C$};
\draw [fill=black] (6.,3.) circle (2.5pt);
\draw[color=black] (6.,3.5) node {$D$};
\draw [fill=black] (4.,8.) circle (2.5pt);
\draw[color=black] (4.14,8.37) node {$F$};
\draw [fill=black] (8.,6.) circle (2.5pt);
\draw[color=black] (8.14,6.37) node {$G$};
\end{scriptsize}
\end{tikzpicture}
    \caption{\label{fig:TE of partition}The TE-connected component of the mixed partition $\Delta$ shown in Figure~\ref{fig:mixed partition}.}
\end{figure}

\begin{itemize}
    \item[Step 1.] Identify the set $TE(\Delta)$ as shown in Figure~\ref{fig:TE of partition}. 
    \item[Step 2.] In $S_5^3(\Delta)$, the edge cofactors are linear polynomials. Following Algorithm~\ref{alg:matrix}, we compute bases of $W_1[\cdot]$ and $W_2[\cdot]$ at the interior vertices $C,D,F$ and $G$.
    
    As an illustration, consider $W_1[D]$. 
    By Corollary~\ref{cor:local linear equation}, this space corresponds to the degree-one homogeneous component of the (decoupled) edge cofactors at $C$.

    These components, denoted
$$f_1(q_{DC}^{(D)}), \quad f_1(q_{DA}^{(D)}), \quad f_1(q_{DG}^{(D)}), \quad f_1(q_{DE}^{(D)})$$
satisfy the local conformality condition:
\begin{equation}\label{eq:conformality condition for edge cofactors in D}
\begin{pmatrix}
1 & 0 & 1 & 0 & 1 & 0 & 1 & 0\\
-\frac{1}{2}\binom{5}{1} & 1 & 3\binom{5}{1} & 1 & -\frac{3}{2}\binom{5}{1} & 1 & \frac{1}{2}\binom{5}{1} & 1\\
\frac{1}{4}\binom{5}{2} & -\frac{1}{2}\binom{4}{1} & 3^2\binom{5}{2} & 3\binom{4}{1} & \frac{9}{4}\binom{5}{2} & -\frac{3}{2}\binom{4}{1}& \frac{1}{4}\binom{5}{2} & \frac{1}{2}\binom{4}{1} \\
-\frac{1}{8}\binom{5}{3} & \frac{1}{4}\binom{4}{2} & 3^3\binom{5}{3} & 3^2\binom{4}{2} & -\frac{27}{8}\binom{5}{3} & \frac{9}{4}\binom{4}{2} & \frac{1}{8}\binom{5}{3} & \frac{1}{4}\binom{4}{2} \\
\frac{1}{16}\binom{5}{4} & -\frac{1}{8}\binom{4}{3} & 3^4\binom{5}{4} & 3^3\binom{4}{3} & \frac{81}{16}\binom{5}{4} & -\frac{27}{8}\binom{4}{3} & \frac{1}{16}\binom{5}{4} & \frac{1}{8}\binom{4}{3}\\
-\frac{1}{32}\binom{5}{5} & \frac{1}{16}\binom{4}{4} & 3^5\binom{5}{5} & 3^4\binom{4}{4} & -\frac{243}{32}\binom{5}{5} & \frac{81}{16}\binom{4}{4} & \frac{1}{32}\binom{5}{5} & \frac{1}{16}\binom{4}{4}
\end{pmatrix}
\begin{pmatrix}
c_{1,0} \\ c_{1,1} \\ c_{2,0} \\ c_{2,1} \\ c_{3,0} \\ c_{3,1} \\ c_{4,0} \\ c_{4,1}
\end{pmatrix}
=
\begin{pmatrix}
0 \\ 0 \\ 0 \\ 0 \\ 0 \\ 0
\end{pmatrix},
\end{equation}
where 
\begin{align*}
f_1(q_{DA}^{(D)}) 
&= c_{1,0}\left(-\frac{1}{2}(x-3)+y-6\right) + c_{1,1}(x-3), \\
f_1(q_{DE}^{(D)}) 
&= c_{2,0}(3(x-3)+y-6) + c_{2,1}(x-3), \\
f_1(q_{DG}^{(D)}) 
&= c_{3,0}\left(-\frac{3}{2}(x-3)+y-6\right) + c_{3,1}(x-3),\\
f_1(q_{DC}^{(D)}) 
&= c_{4,0}\left(\frac{1}{2}(x-3)+y-6\right) + c_{4,1}(x-3).
\end{align*}
By solve the equation~\eqref{eq:conformality condition for edge cofactors in D} we can get
\begin{align*}
f_1(q_{DA}^{(D)}) 
&= -\left(\frac{500\,q_{1}}{49}+\frac{120\,q_{2}}{49}\right)\,\left(x-6\right)-\left(\frac{300\,q_{1}}{343}+\frac{212\,q_{2}}{343}\right)\,\left(\frac{x}{2}-y\right), \\[4pt]
f_1(q_{DE}^{(D)}) 
&= \left(\frac{800\,q_{1}}{3969}+\frac{416\,q_{2}}{3969}\right)\,\left(x-6\right)-\left(\frac{14272\,q_{1}}{250047}+\frac{7744\,q_{2}}{250047}\right)\,\left(3\,x+y-21\right), \\[4pt]
f_1(q_{DG}^{(D)}) 
&= -\left(\frac{1325\,q_{1}}{729}+\frac{428\,q_{2}}{729}\right)\,\left(y-\frac{3\,x}{2}+6\right)-\left(\frac{250\,q_{1}}{81}+\frac{85\,q_{2}}{81}\right)\,\left(x-6\right), \\[4pt]
f_1(q_{DC}^{(D)}) 
&= q_{1}\,\left(\frac{x}{2}+y-6\right)+q_{2}\,\left(x-6\right).
\end{align*}
where $q_1, q_2$ are the freedom for the interior vertex $D$.

Bases for $W_2[D]$, and for the spaces $W_j[C],W_j[F], W_j[G]\ (j=1,2)$ are obtained in the same manner.
     \item[Step 3.] Similarly, the global conformality conditions lead to a homogeneous linear system defined by the following matrix::
     $$M(TE(\Delta_{\mathrm{YS}})) = \begin{pmatrix} 
1/2 & 1 & 0 & 0 \\ 
1 & 0 & 0 & 0 \\ 
-6 & -6 & 0 & 0 \\ 
0 & 0 & 1/2 & 1 \\ 
0 & 0 & 1 & 0 \\ 
0 & 0 & -10 & -8 \\ 
-\frac{175}{486} & -\frac{41}{243} & \frac{164}{481} & -\frac{352}{2279} \\ 
-\frac{1325}{729} & -\frac{428}{729} & -\frac{1256}{1095} & \frac{397}{856} \\ 
\frac{1850}{243} & \frac{674}{243} & \frac{457}{110} & -\frac{3713}{2400} 
\end{pmatrix}$$

Since $\mathrm{rank}M(TE(\Delta_{\mathrm{YS}})) = 4$. Therefore,
$$\dim S_5^2(\Delta_{\mathrm{YS}}) = \binom{7}{2} + 6\binom{3}{2} + 4 - 4 = 39.$$
\end{itemize}


\section{Conclusion and future work}

This paper establishes a unified and computationally explicit framework for determining the dimension of bivariate spline spaces $S_d^\mu(\Delta)$ over arbitrary rectilinear partitions via the smoothing cofactor method.

The primary contributions are summarized as follows. First, the dimension formula previously developed for T-meshes is extended to general rectilinear partitions through the introduction of $TE$-connected components. The dimension of $S_d^\mu(\Delta)$ is expressed by an explicit algebraic formula based on the rank of the associated conformality matrix $M(TE(\Delta))$. Second, a systematic algorithm (Algorithm~\ref{alg:matrix}) is provided to construct this matrix for any smoothness order $\mu$, utilizing homogeneous polynomial decomposition combined with decoupled edge cofactors.

Furthermore, a new class of rectilinear partitions, termed \textbf{partitions with disjoint truncated $l$-edges}, is introduced. It is proven that under specific conditions, the dimension of the corresponding spline space exactly attains Schumaker's lower bound~\cite{schumaker1979dimension}.

The effectiveness and correctness of the proposed framework are validated through several classical and novel examples. These include the Morgan--Scott partition, which recovers the well-known dimension instability, and the Yuan--Stillman partition, which confirms a counterexample to the Schenck--Stiller "$2r+1$" conjecture.

By reducing dimension calculation to the rank computation of an explicitly constructible matrix, the framework remains applicable to spline spaces of any degree $d$ and smoothness $\mu$ over arbitrary rectilinear partitions. This provides a practical and theoretically rigorous tool for spline analysis in geometric modeling, finite element methods, and isogeometric analysis. Future research will focus on extending this approach to trivariate splines and developing adaptive basis constructions derived from the established conformality constraints.


\subsection*{Acknowledgements}
We are grateful to Assistant Professor Beihui Yuan (BIMSA) for sharing the numerical results presented in Section 4.2. This comparison served as a crucial validation of our proposed method. This work is supported by the Key Project of the National Natural Science Foundation of China (No.12494550 and No.12494555) and the Fundamental Research Funds for the Central Universities (No.WK0010000096). And the author declares that there is no conflict of interest.

\bibliographystyle{IEEEtran}
\bibliography{T1}

\appendix\label{appendix}
\section{Appendix: Geometric Interpretation and Proof of the Consistency Condition}
In this appendix, we provide a rigorous geometric proof that the algebraic consistency condition derived in the main text implies the concurrency of the characteristic lines $AE$, $BD$, and $CF$. We utilize the trigonometric form of Ceva's Theorem and the Area Method.

\subsection{Lemma: Trigonometric Ceva's Theorem}

\begin{lemma}[Trigonometric Form of Ceva's Theorem]
Let $A, B, C$ be the vertices of a triangle. Let $D, E, F$ be points associated with sides or sectors such that lines $AE$, $BD$, and $CF$ are well-defined. These three lines are concurrent (intersect at a single point $O$) if and only if:
\begin{equation}
    \frac{\sin \angle BAE}{\sin \angle CAE} \cdot \frac{\sin \angle CBF}{\sin \angle ABF} \cdot \frac{\sin \angle ACD}{\sin \angle BCD} = 1.
\end{equation}
\end{lemma}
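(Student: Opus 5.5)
We interpret the statement in the configuration where it is used in the Morgan--Scott computation: $D,E,F$ lie in the interior of triangle $ABC$. Then $AE$, $BD$, $CF$ are cevians, each meeting the opposite side at an interior point. The plan is to reduce the trigonometric statement to the classical (length) form of Ceva's theorem via the area method. Afterwards we argue the converse directly by a monotonicity argument, so as not to rely on sign conventions.

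First, let $X = AE\cap BC$, $Y = BF\cap CA$, $Z = CD\cap AB$. Triangles $ABX$ and $ACX$ share the altitude from $A$, so
\[
\frac{BX}{XC}=\frac{[ABX]}{[ACX]}=\frac{\tfrac12\,AB\cdot AX\sin\angle BAE}{\tfrac12\,AC\cdot AX\sin\angle CAE}=\frac{AB}{AC}\cdot\frac{\sin\angle BAE}{\sin\angle CAE}.
\]
Here we use $\angle BAX=\angle BAE$, since $E$ lies on ray $AX$. The same computation at $B$ and at $C$ gives
\[
\frac{CY}{YA}=\frac{BC}{BA}\cdot\frac{\sin\angle CBF}{\sin\angle ABF},\qquad
\frac{AZ}{ZB}=\frac{CA}{CB}\cdot\frac{\sin\angle ACD}{\sin\angle BCD}.
\]
Multiplying the three identities, the side lengths cancel. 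The trigonometric product in the lemma is therefore exactly $\frac{BX}{XC}\cdot\frac{CY}{YA}\cdot\frac{AZ}{ZB}$.

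For the direction ``concurrent $\Rightarrow$ product $=1$'', we prove the classical Ceva identity with the same area method. If the three cevians meet at $O$, then $\frac{BX}{XC}=\frac{[ABO]}{[ACO]}$, and cyclically for the other two ratios. The product telescopes to $1$.

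For the converse, let $O=AE\cap BD$, which is an interior point since both are cevians. The cevian $CO$ then satisfies the identity by the forward direction. Parametrize a cevian through $C$ by $\theta=\angle ACD\in(0,\angle ACB)$. Then $g(\theta)=\sin\theta/\sin(\angle ACB-\theta)$ is strictly increasing on this interval, because its derivative has numerator $\sin(\angle ACB)>0$. If the product equals $1$ for both $CD$ and $CO$, then $g$ takes the same value at the two angles, so the angles agree and $CD=CO$. Hence the three lines are concurrent.

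The main obstacle is the vagueness of the hypothesis ``points associated with sides or sectors''. If some of $D,E,F$ lie outside the triangle, the cevians may meet the extended sides and the ratios need signs. To handle this, I would replace lengths and areas by signed ratios and signed areas, and the angles by oriented angles. The same cancellation then goes through, and monotonicity of $g$ still holds on each interval where $g$ is defined. For the Morgan--Scott application, however, only the interior case is needed, and I would state the lemma and its proof under that hypothesis.
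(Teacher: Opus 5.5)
Your first two steps are correct. With $X=AE\cap BC$, $Y=BF\cap CA$, $Z=CD\cap AB$, the sine product equals $\frac{BX}{XC}\cdot\frac{CY}{YA}\cdot\frac{AZ}{ZB}$, and the area argument shows this is $1$ when $AE$, $BF$, $CD$ concur. But this is the Ceva product of the cevians $AE$, $BF$, $CD$, whereas the lemma claims equivalence with concurrency of $AE$, $BD$, $CF$. Your converse switches between the two triples without saying so. You set $O=AE\cap BD$ and say that $CO$ ``satisfies the identity by the forward direction''. Applied to $AE$, $BD$, $CO$, the forward direction gives
\[
\frac{\sin\angle BAE}{\sin\angle CAE}\cdot\frac{\sin\angle CBD}{\sin\angle ABD}\cdot\frac{\sin\angle ACO}{\sin\angle BCO}=1,
\]
whose middle factor is not the factor $\sin\angle CBF/\sin\angle ABF$ in the hypothesis. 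So you cannot conclude $g(\angle ACO)=g(\angle ACD)$. Even if you could, ``$CD$ passes through $O$'' would give concurrency of $AE$, $BD$, $CD$, which is not the claim.

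This cannot be repaired, because the lemma as printed is false; the paper quotes it without proof. Take $A=(0,0)$, $B=(1,0)$, $C=(0,1)$, centroid $G=(\frac13,\frac13)$, and the interior points $E=(\frac15,\frac15)$ on $AG$, $F=(\frac23,\frac16)$ on $BG$, $D=(\frac1{12},\frac56)$ on $CG$. Then $AE$, $BF$, $CD$ meet at $G$, so the sine product is $1$; directly, it is $1\cdot\frac1{\sqrt2}\cdot\sqrt2$. Yet $BD$ and $CF$ cross $AE$ (the line $y=x$) at different points, $(\frac{10}{21},\frac{10}{21})$ and $(\frac49,\frac49)$. You should flag the mislabeling and prove a consistent version. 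Either state that $AE$, $BF$, $CD$ concur if and only if the product is $1$, or keep $AE$, $BD$, $CF$ and replace the last two factors by $\sin\angle CBD/\sin\angle ABD$ and $\sin\angle ACF/\sin\angle BCF$. In the first version, take $O:=AE\cap BF$. Your monotonicity argument, based on $g'(\theta)=\sin\angle ACB/\sin^2(\angle ACB-\theta)>0$, then goes through unchanged, and the proposal becomes the standard proof.
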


\subsection{Proof of Concurrency}

We consider the geometric configuration where $D, E, F$ are strictly inside the triangle $\triangle ABC$, connected as shown in Figure~\ref{fig:MS partition}. The consistency condition is given by:
\begin{equation}
    \label{eq:gamma_identity}
    \mathcal{C} = \underbrace{\frac{(\gamma_{DE}-\gamma_{DA})(\gamma_{DE}-\gamma_{DC})}{(\gamma_{DF}-\gamma_{DA})(\gamma_{DF}-\gamma_{DC})}}_{R_D} \cdot
    \underbrace{\frac{(\gamma_{EF}-\gamma_{EB})(\gamma_{EF}-\gamma_{EC})}{(\gamma_{DE}-\gamma_{EB})(\gamma_{DE}-\gamma_{EC})}}_{R_E} \cdot
    \underbrace{\frac{(\gamma_{FD}-\gamma_{FA})(\gamma_{FD}-\gamma_{FB})}{(\gamma_{EF}-\gamma_{FA})(\gamma_{EF}-\gamma_{FB})}}_{R_F}
    = 1.
\end{equation}

\begin{itemize}
    \item[Step 1.] Transformation to Trigonometric Form.

    Using the identity $\gamma_{XY} - \gamma_{XZ} \propto \sin(\theta_{XZ} - \theta_{XY})$, and observing that the cosine terms in the denominators cancel out cyclically (due to the symmetry of the edges $DE, EF, FD$), Eq.~(\ref{eq:gamma_identity}) transforms into a product of sine ratios:
\begin{equation}
    \label{eq:sine_identity}
    \frac{\sin(DA, DE)\sin(DC, DE)}{\sin(DA, DF)\sin(DC, DF)} \cdot 
    \frac{\sin(EB, EF)\sin(EC, EF)}{\sin(EB, ED)\sin(EC, ED)} \cdot 
    \frac{\sin(FA, FD)\sin(FB, FD)}{\sin(FA, FE)\sin(FB, FE)} = 1.
\end{equation}

    \item[Step 2.] Geometric Interpretation via Area Method.

    We relate the sine terms to the areas of the sub-triangles. Let $S_{XYZ}$ denote the area of $\triangle XYZ$. Using the area formula $S_{ADE} = \frac{1}{2} DA \cdot DE \cdot \sin(DA, DE)$, we can rewrite the ratios. For instance, the term $R_D$ (in its sine form) becomes:
\begin{equation}
    R_D \propto \frac{S_{ADE}}{S_{ADF}} \cdot \frac{S_{CDE}}{S_{CDF}}.
\end{equation}
Note that the side lengths $DA, DC$ cancel within the fraction, and the lengths $DE, DF$ will cancel when multiplied with the corresponding terms from $R_E$ and $R_F$.

Substituting these area forms into the full product $\mathcal{C} = R_D R_E R_F = 1$:
\begin{equation}
    \underbrace{\left(\frac{S_{ADE} \cdot S_{CDE}}{S_{ADF} \cdot S_{CDF}}\right)}_{R_D} \cdot
    \underbrace{\left(\frac{S_{BEF} \cdot S_{CEF}}{S_{BDE} \cdot S_{CDE}}\right)}_{R_E} \cdot
    \underbrace{\left(\frac{S_{ADF} \cdot S_{BDF}}{S_{AEF} \cdot S_{BEF}}\right)}_{R_F} = 1.
\end{equation}
Observing this product, we see significant cancellations:
\begin{itemize}
    \item $S_{CDE}$ appears in the numerator of $R_D$ and the denominator of $R_E$.
    \item $S_{ADF}$ appears in the denominator of $R_D$ and the numerator of $R_F$.
    \item $S_{BEF}$ appears in the numerator of $R_E$ and the denominator of $R_F$.
\end{itemize}
Taking into account the symmetry $S_{XYZ} = S_{XZY}$, the equation simplifies to a concise relation involving six specific areas:
\begin{equation}
    \label{eq:area_identity}
    \frac{S_{ADE}}{S_{AEF}} \cdot \frac{S_{BDF}}{S_{BDE}} \cdot \frac{S_{CEF}}{S_{CDF}} = 1.
\end{equation}

    \item[Step 3.] From Area Ratios to Concurrency.

    We now interpret the geometric meaning of Eq.~(\ref{eq:area_identity}). Consider the first term $\frac{S_{ADE}}{S_{AEF}}$. Since $\triangle ADE$ and $\triangle AEF$ share a common base $AE$, their area ratio is equal to the ratio of the heights (distances) from vertices $D$ and $F$ to the line $AE$. Let $h_P(L)$ denote the perpendicular distance from point $P$ to line $L$. Then:
\begin{equation}
    \frac{S_{ADE}}{S_{AEF}} = \frac{h_D(AE)}{h_F(AE)}, \quad
    \frac{S_{BDF}}{S_{BDE}} = \frac{h_F(BD)}{h_E(BD)}, \quad
    \frac{S_{CEF}}{S_{CDF}} = \frac{h_E(CF)}{h_D(CF)}.
\end{equation}
Thus, the condition becomes a constraint on the distances:
\begin{equation}
    \label{eq:height_identity}
    \frac{h_D(AE)}{h_F(AE)} \cdot \frac{h_F(BD)}{h_E(BD)} \cdot \frac{h_E(CF)}{h_D(CF)} = 1.
\end{equation}
We now show that this condition implies concurrency. Assume the lines $AE, BD, CF$ intersect at a common point $O$. In this case, points $D$ and $B$ lie on the line passing through $O$ (line $BD$), so $D, B, O$ are collinear. Similarly for $E, A, O$ and $F, C, O$.
Using similar triangles (or basic trigonometry at the intersection $O$), the ratio of distances to the transversal line $AE$ is determined by the position of $O$:
\begin{equation}
    \frac{h_D(AE)}{h_F(AE)} = \frac{OD \cdot \sin \angle DOA}{OF \cdot \sin \angle FOA}.
\end{equation}
Applying this to all three terms in Eq.~(\ref{eq:height_identity}):
\begin{equation}
    \text{LHS} = \left( \frac{OD \sin \angle DOA}{OF \sin \angle FOA} \right) \cdot \left( \frac{OF \sin \angle FOB}{OE \sin \angle EOB} \right) \cdot \left( \frac{OE \sin \angle EOC}{OD \sin \angle DOC} \right).
\end{equation}
The segment lengths $OD, OF, OE$ cancel out. Furthermore, since $A,E,O$ are collinear, $B,D,O$ are collinear, and $C,F,O$ are collinear, the angles are vertically opposite (e.g., $\angle DOA = \angle EOB$ or $\pi - \angle EOB$). Thus, the sine terms cancel perfectly, resulting in 1.

Since the geometric derivation is reversible (assuming non-degenerate configurations), the satisfaction of the consistency condition (Eq.~\ref{eq:gamma_identity}) implies that Eq.~(\ref{eq:height_identity}) holds, which is the necessary and sufficient condition for the lines $AE$, $BD$, and $CF$ to be concurrent.
\end{itemize}

\end{document}